\newtheorem{theorem}{Theorem}[section]
\newtheorem{lemma}[theorem]{Lemma}
\newtheorem{cor}[theorem]{Corollary}
\newtheorem{prop}[theorem]{Proposition}
\theoremstyle{definition}
\newtheorem{definition}[theorem]{Definition}
\newtheorem{example}[theorem]{Example}
\newtheorem*{aspta}{Assumption A}
\theoremstyle{remark}
\newtheorem{remark}[theorem]{Remark}
\numberwithin{equation}{section}
\newcommand{\one}{\mathbf{1}}
\newcommand{\vvverto}{\mathopen{|\hspace{-0.12em}|\hspace{-0.12em}|}}
\newcommand{\vvvertc}{\mathclose{|\hspace{-0.12em}|\hspace{-0.12em}|}}
\newcommand{\tnorm}[1]{\vvverto #1 \vvvertc}
\newcommand{\ii}{\mathrm{i}}
\begin{document}

\title{The local geometry of finite mixtures}


\author{Elisabeth Gassiat}
\address{Laboratoire de Math{\'e}matiques, 
Universit{\'e} Paris-Sud, 
B{\^a}timent 425, 
91405 Orsay Cedex, France}
\email{elisabeth.gassiat@math.u-psud.fr}

\author{Ramon van Handel}
\address{Sherrerd Hall, Room 227,
Princeton University, 
Princeton, NJ 08544, USA.}
\email{rvan@princeton.edu}

\subjclass[2010]{Primary 41A46  
; Secondary 60F15 
}

\keywords{local metric entropy; bracketing numbers; finite mixtures}

\date{}


\begin{abstract}
We establish that for $q\ge 1$, the class of convex combinations of $q$ 
translates of a smooth probability density has local doubling dimension 
proportional to $q$.  The key difficulty in the proof is to control the
local geometric structure of mixture classes.  Our local geometry theorem
yields a bound on the (bracketing) metric entropy of a class of normalized
densities, from which a local entropy bound is deduced by a
general slicing procedure.
\end{abstract}

\maketitle

\section{Introduction}
\label{sec:intro}

Let $(X,d)$ be a metric space, and consider a subset 
$T=\{t_\xi:\xi\in\Xi\}$ of $X$ that is parametrized by a bounded subset 
$\Xi$ of $\mathbb{R}^d$.  Roughly speaking, we are interested in the 
following question: can $T$ be viewed as a \emph{finite-dimensional} 
subset of $X$?  It is certainly tempting to think so, as the parameter 
set $\Xi$ is finite-dimensional. This idea is easily made precise if the 
induced metric $d_T(\xi,\xi')=d(t_\xi,t_{\xi'})$ on $\Xi$ is comparable 
to a norm on $\mathbb{R}^d$, so that $T$ inherits the Euclidean 
geometry.  However, there are natural examples whose geometry is highly 
non-Euclidean, so that the conclusion is far from obvious. The aim of 
this paper is to investigate in detail such a problem that arises from 
applications in statistics.

To set the stage for the problem that we will consider, let us recall 
some metric notions of dimension.  For a subset $T$ of
a metric space $(X,d)$, the \emph{covering number} $N(T,\varepsilon)$ is 
the smallest cardinality of a covering of $T$ by
$\varepsilon$-balls \cite{KT59}:
$$
	N(T,\varepsilon) = \inf\bigg\{
	n:\exists~x_i\in X,~i=1,\ldots,n\mbox{ s.t.\ }
	T\subseteq\bigcup_{i=1}^n B(x_i,\varepsilon)
	\bigg\},
$$
where $B(x,\varepsilon)=\{x'\in X:d(x,x')\le\varepsilon\}$.  The covering
number, or equivalently the metric entropy
$\log N(T,\varepsilon)$, quantifies the capacity of the set $T$,
and its scaling in $\varepsilon$ is closely connected to
dimension. Indeed, let $|\cdot|$ be a norm on $\mathbb{R}^d$, so that 
$(\mathbb{R}^d,|\cdot|)$ is a finite-dimensional Banach space.  A standard
estimate \cite[Lemma 4.14]{Mas07} gives
$$
	N(B(t,\delta),\varepsilon) \le \bigg(
	\frac{3\delta}{\varepsilon}
	\bigg)^d
$$
for any $\varepsilon\le\delta$, where
$B(t,\delta)=\{x\in\mathbb{R}^d:|x-t|\le\delta\}$.
This estimate has two trivial
consequences: first, for any bounded $T\subset 
(\mathbb{R}^d,|\cdot|)$, there is a constant $C_1$ so that
\begin{equation}
\label{eq:globalent}
	N(T,\varepsilon) \le \bigg(\frac{C_1}{\varepsilon}\bigg)^d
\end{equation}
for all $\varepsilon$ sufficiently small.  On the other hand, if we fix
a distinguished point $t_0\in T$, there is a constant $C_2$ such that
for all $\varepsilon/\delta$ sufficiently small
\begin{equation}
\label{eq:localent}
	N(T\cap B(t_0,\delta),\varepsilon) \le
	\bigg(\frac{C_2\delta}{\varepsilon}\bigg)^d.
\end{equation}
Either (\ref{eq:globalent}) or (\ref{eq:localent}) may be used as a
notion of finite-dimensionality for a set $T$ in a general metric space 
$(X,d)$: a set satisfying the \emph{global} entropy
bound (\ref{eq:globalent}) has finite Kolmogorov dimension
$\log N(T,\varepsilon)/\log(1/\varepsilon)\lesssim d$, while a
set satisfying the \emph{local} entropy bound (\ref{eq:localent}) has finite
local\footnote{
	The doubling (Assouad) dimension of a set $T$ is defined as
	the supremum of the local doubling dimension 
	$\sup_{\varepsilon}\log N(T\cap B(t_0,2\varepsilon),\varepsilon)$
	with respect to $t_0$
	\cite{Ass83,Hei01}.  For the purposes of this paper, we will
	consider mainly the local version of this concept where
	the point $t_0$ is fixed.
} doubling dimension 
$\log N(T\cap B(t_0,2\varepsilon),\varepsilon)
\lesssim d$.
Clearly (\ref{eq:localent}) implies
(\ref{eq:globalent}), but not conversely.

Now consider a parametrized set $T=\{t_\xi:\xi\in\Xi\}$ in a metric 
space $(X,d)$, where $\Xi$ is a bounded subset of $\mathbb{R}^d$, and
let $|\cdot|$ be a norm on $\mathbb{R}^d$.
As $(\Xi,|\cdot|)$ is finite-dimensional in either sense (\ref{eq:globalent})
or (\ref{eq:localent}), these properties are inherited by $T$ provided that
the metric $d$ is comparable to $|\cdot|$.
Indeed, if we have a H\"older-type upper bound
$d(t_\xi,t_{\xi'})\le C|\xi-\xi'|^\alpha$, then $T$ satisfies the
global entropy bound (\ref{eq:globalent}); if we have in addition the
lower bound $d(t_\xi,t_{\xi_0})\ge c|\xi-\xi_0|^\alpha$, we obtain
the local entropy bound (\ref{eq:localent}) with $t_0=t_{\xi_0}$.\footnote{
	If $d(t_\xi,t_{\xi'})\le C|\xi-\xi'|^\alpha$,
	then any covering of $\Xi$ by balls of radius 
	$(\varepsilon/C)^{1/\alpha}$ yields a covering 
	of $T$ by $\varepsilon$-balls, so that
	$N(T,\varepsilon)\le N(\Xi,(\varepsilon/C)^{1/\alpha}) \le 
	(C'/\varepsilon)^{d/\alpha}$.  If also
	$d(t_\xi,t_{\xi_0})\ge c|\xi-\xi_0|^\alpha$, then
	$\{\xi\in\Xi:d(t_\xi,t_{\xi_0})\le\delta\}\subseteq
	\Xi\cap B(\xi_0,(\delta/c)^{1/\alpha})$, so
	$N(T\cap B(t_{\xi_0},\delta),\varepsilon)\le
	(C''\delta/\varepsilon)^{d/\alpha}$.
}
The upper bound is easily obtained in many 
cases of interest, so that finite-dimensionality in the sense
(\ref{eq:globalent}) is not too problematic.
The lower bound is much more delicate, however.
In its absence, finite-dimensionality in the sense (\ref{eq:localent})
is far from obvious.

We will investigate these issues in the context of a prototypical 
example, to be described presently, that is of significant independent 
interest. Fix a probability density $f_0$ on $\mathbb{R}^d$ 
(that is, $f_0\ge 0$ and $\int f_0\,dx=1$), and consider the class
$$
	\mathcal{M}_q = \bigg\{ x\mapsto
	\sum_{i=1}^q\pi_if_0(x-\theta_i):
	\pi_i\ge 0,~
	\sum_{i=1}^q \pi_i = 1,~
	\theta_i \in \Theta
	\bigg\}
$$
of convex combinations of $q$ translates of $f_0$, where $\Theta$ is a 
bounded subset of $\mathbb{R}^d$. Such densities appear in numerous 
statistical applications, where they are frequently known as \emph{location 
mixtures}. $\mathcal{M}_q$ is a subset of the space $\mathcal{M}$ of all 
probability densities on $\mathbb{R}^d$, endowed with a suitable metric 
$d$.  

$\mathcal{M}_q$ is parametrized by the finite-dimensional subset 
$\Xi_q=\Delta_{q-1}\times\Theta^q$ of $\mathbb{R}^{qd+q-1}$, where 
$\Delta_{q-1}$ is the $q$-simplex. Natural metrics $d$ satisfy a 
H\"older-type upper bound with respect to a norm on $\Xi_q$ (e.g., step 
2 in the proof of Theorem \ref{thm:mainmixtures} below). However, the 
corresponding lower bound is impossible to obtain.

\begin{example}
\label{ex:geom}
We will write $f_\theta(x)= f_0(x-\theta)$ for simplicity.
Fix $\theta^\star\in\Theta$ and let $f^\star=f_{\theta^\star}$.
Then $f^\star\in\mathcal{M}_2$, but $f^\star$ is not uniquely
represented by a parameter in $\Xi_2$:
\begin{align*}
	&\{(\pi,\theta)\in\Xi_2:
	d(\pi_1f_{\theta_1}+\pi_2f_{\theta_2},f^\star)=0\}
	={}\\
	&\{\pi\in\Delta_1,\theta_1=\theta_2=\theta^\star\}\cup
	\{\pi_1=0,\theta_1\in\Theta,\theta_2=\theta^\star\}\cup
	\{\pi_1=1,\theta_1=\theta^\star,\theta_2\in\Theta\}.
\end{align*}
Clearly $d$ cannot be lower bounded by any norm on $\Xi_2$,
as such a bound would necessarily imply that
$\{(\pi,\theta)\in\Xi_2:
        d(\pi_1f_{\theta_1}+\pi_2f_{\theta_2},f^\star)=0\}$
consists of a single point.  Thus the above approach to 
(\ref{eq:localent}) is useless here.
\end{example}

\begin{figure}
\centering
\includegraphics[width=.85\textwidth]{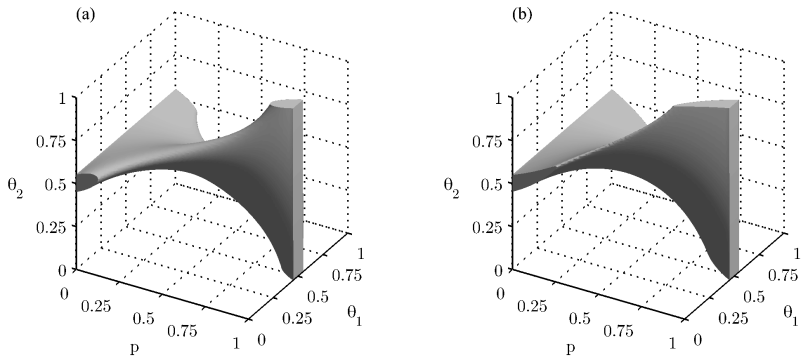}
\vskip-.4cm
\caption{\small 
Let $f_\theta(x)=e^{-2(x-\theta)^2}$, $f^\star = f_{0.5}$,
$\mathcal{M}_2=\{pf_{\theta_1}+
(1-p)f_{\theta_2}:p,\theta_1,\theta_2\in[0,1]\}$.
The plots illustrate
(a) the set of parameters $(p,\theta_1,\theta_2)$ corresponding
to the Hellinger ball $\{f\in\mathcal{M}_2:h(f,f^\star)\le 0.05\}$; and
(b) the parameter set 
$\{(p,\theta_1,\theta_2):N(p,\theta_1,\theta_2)\le 0.05\}$ with
$N(p,\theta_1,\theta_2) = 
|p(\theta_1-0.5)+(1-p)(\theta_2-0.5)|
+\tfrac{1}{2}p(\theta_1-0.5)^2
+\tfrac{1}{2}(1-p)(\theta_2-0.5)^2$.  The two plots are related by
the local geometry 
Theorem \ref{thm:localgeom}, which yields
$c^\star N(p,\theta_1,\theta_2)\le h(pf_{\theta_1}+(1-p)f_{\theta_2},f^\star)
\le C^\star N(p,\theta_1,\theta_2)$.
}
\label{fig:hball}
\end{figure}

The phenomenon illustrated in this example can be stated more generally. 
For $f^\star\in\mathcal{M}_{q^\star}$ such that $q^\star<q$ (note that 
$f^\star\in\mathcal{M}_q$ as $\mathcal{M}_{q}\subset\mathcal{M}_{q+1}$ 
for all $q$), the subset of parameters $\Xi_q(\delta)\subset\Xi_q$ 
corresponding to the ball 
$\mathcal{M}_q(\delta)=\{f\in\mathcal{M}_q:d(f,f^\star)\le\delta\}$ 
behaves nothing at all like a ball in a finite-dimensional Banach space 
(see Figure \ref{fig:hball}(a)): indeed, the diameter of $\Xi_q(\delta)$ 
is even bounded away from zero as $\delta\downarrow 0$. There is 
therefore no hope to deduce a local entropy bound of the form 
(\ref{eq:localent}) for $N(\mathcal{M}_q(\delta),\varepsilon)$ directly 
from the corresponding bound in $\mathbb{R}^{qd+q-1}\supset\Xi_q$.  
This provides a vivid illustration of the difficulty of 
establishing local entropy bounds in geometrically irregular settings.  
Nevertheless, we will be able to obtain local entropy bounds for the 
mixture classes $\mathcal{M}_q$ in section \ref{sec:mixtures} below.

For concreteness, we endow $\mathcal{M}_q$ with the 
Hellinger metric $h(f,g)=\|\sqrt{f}-\sqrt{g}\|_{L^2}$, which is the 
relevant metric for statistical applications \cite[ch.\ 7]{vdG00}, 
\cite{Mas07} (however, our results are easily adapted to other commonly 
used probability metrics---the total variation metric $d_{\rm 
TV}(f,g)=\|f-g\|_{L^1}$, for example---using almost identical proofs). 
The main result, Theorem \ref{thm:localmixtures}, provides an explicit 
bound of the form (\ref{eq:localent}) for $\mathcal{M}_q$ under suitable 
smoothness assumptions on $f_0$.

The fundamental challenge that we face in the proof is to develop a 
sharp quantitative understanding of the local geometry of mixtures 
(illustrated in Figure \ref{fig:hball}). The key result that we prove in 
this direction is Theorem \ref{thm:localgeom}, which forms the central
contribution of this paper.  As this result is rather technical,
we postpone its description to section \ref{sec:mixturesGeo} below.
However, an important consequence of this result is as follows:
given a mixture $f^\star = 
\sum_{i=1}^{q^\star}\pi_i^\star f_{\theta_i^\star}$, one can choose 
sufficiently small neighborhoods $A_1,\ldots,A_{q^\star}$ of 
$\theta_1,\ldots,\theta_{q^\star}$, respectively, such that for any
$q\ge 1$ and 
mixture $f=\sum_{i=1}^q\pi_if_{\theta_i}$, the Hellinger metric
$h(f,f^\star)$ is of the same order as
$$
	\sum_{\theta_j\in A_0}\pi_j +\sum_{i=1}^{q^\star}
        \Bigg\{
        \Bigg|
        \sum_{\theta_j\in A_i}\pi_j
        -\pi_i^\star\Bigg| 
        \\ \mbox{}
        +
        \Bigg\|
        \sum_{\theta_j\in A_i}\pi_j(\theta_j-\theta_i^\star)
        \Bigg\|  
        + \frac{1}{2}
        \sum_{\theta_j\in A_i}\pi_j\|\theta_j-\theta_i^\star\|^2
        \Bigg\}
$$
(here $A_0=\mathbb{R}^d\backslash (A_1\cup\cdots\cup A_{q^\star})$).  
This pseudodistance controls precisely the set of parameters in $\Xi_q$ 
with density close to $f^\star$, see Figure \ref{fig:hball} for an
example. 

Let us emphasize that while the local geometry theorem relates the 
Hellinger metric on $\mathcal{M}_q$ to a pseudodistance on $\Xi_q$, 
the latter is not a norm or even a metric.  It is therefore still not 
possible to control the local entropy of $\mathcal{M}_q$ as in the case 
where the metric is comparable to a norm on $\Xi_q$.  Instead, we deduce 
the local entropy bound in two steps.  First, we observe that the local 
geometry theorem allows us to obtain a \emph{global} entropy bound of 
the form (\ref{eq:globalent}) for the class of weighted densities
$$
	\mathcal{D}_q=\bigg\{
	\frac{\sqrt{f/f^\star}-1}{h(f,f^\star)}:
	f\in\mathcal{M}_q,~f\ne f^\star
	\bigg\},
$$
as the above pseudodistance controls the coefficients 
in the Taylor expansion of $f$.  This is accomplished in Theorem 
\ref{thm:mainmixtures}.  The global entropy bound for 
$\mathcal{D}_q$ now yields a local entropy bound for $\mathcal{M}_q$
using a slicing procedure.  The latter is not specific to 
mixtures, and will be developed first in a general setting in section 
\ref{sec:entropy}.

Beside their intrinsic interest, the results in this paper are of direct 
relevance to statistical applications.  Many problems in statistics and 
probability make use of estimates on the metric entropy of classes of 
densities: metric entropy controls the rate of convergence of uniform 
limit theorems in probability, and is therefore of central importance in 
the design and analysis of statistical estimators 
\cite{VW96,vdG00,Mas07}.  Such applications frequently require a 
slightly stronger notion of metric entropy known as bracketing entropy, 
which we will consider throughout this paper; see section 
\ref{sec:entropy}. In infinite-dimensional situations, the global 
entropy is chiefly of interest: global entropy estimates for various 
classes of probability densities can be found in 
\cite{VW96,vdG00,Mas07,BGL07,GLW10}.  However, in finite-dimensional 
settings, global entropy bounds are known to yield sub-optimal results, 
and here local entropy bounds are essential to obtain optimal 
convergence rates of estimators \cite[\S 7.5]{vdG00}. In 
the case of mixtures, the difficulty of obtaining local entropy bounds 
was noted, e.g., in \cite{GW00,MM11}. Applications of the results 
in this paper are given in \cite{GasHanOrder,GasRouHMM}.

\section{From global entropy to local entropy}
\label{sec:entropy} 

The classical notion of covering numbers $N(T,\varepsilon)$ was defined 
in the introduction.  We will consider throughout this paper a somewhat 
finer notion of covering by brackets (order intervals) rather than by 
balls.  In this section, we will work in the general setting of normed 
vector lattices (normed Riesz spaces, see \cite{AB06}).

\begin{definition}
Let $(X,\|\cdot\|)$ be a normed vector lattice.  For any subset
$T\subseteq X$ and $\varepsilon>0$, the bracketing number 
$N_{[]}(T,\varepsilon)$ is defined as
$$
	N_{[]}(T,\varepsilon) = \inf\bigg\{
	n:\exists~l_i,u_i\in X,~\|u_i-l_i\|\le\varepsilon,~
	i=1,\ldots,n
	\mbox{ s.t.\ }
	T\subseteq\bigcup_{i=1}^n [l_i,u_i]
	\bigg\},
$$
where $[l,u]=\{x\in X:l\le x\le u\}$.
\end{definition}

Note that as $[l,u]\subset B(l,\|u-l\|)$, it is 
evident that $N(T,\varepsilon)\le N_{[]}(T,\varepsilon)$ for any 
$T\subseteq X$ and $\varepsilon>0$.  Bounds on the bracketing number 
therefore imply bounds on the covering number, but not conversely.
The finer covering by brackets is essential in many
probabilistic and statistical applications \cite{VW96,vdG00,Mas07}.

Let $(X,\|\cdot\|)$ be a normed vector lattice, and let us fix a subset 
$T\subseteq X$ and a distinguished point $t_0\in T$.  Our general aim is to 
obtain an estimate on the local covering (or bracketing) number 
$N(T\cap B(t_0,\delta),\varepsilon)$ that is polynomial in 
$\delta/\varepsilon$.  As is explained in the introduction, such 
estimates can be much more difficult to obtain than the corresponding 
estimates on the global covering number $N(T,\varepsilon)$ that 
are polynomial in $1/\varepsilon$.  Unfortunately, the latter is strictly 
weaker than the former.

Nonetheless, global covering estimates can be useful.  For any $t\ne t_0$,
define
$$
	d_t = \frac{t-t_0}{\|t-t_0\|},\qquad\quad
	D_0 = \{d_t:t\in T,~t\ne t_0\}. 
$$ 
The main message of this section is that a \emph{local} covering 
estimate for $T$ can be obtained from a \emph{global} covering estimate 
for the weighted class $D_0\subseteq X$. As global entropy estimates can 
be much easier to obtain than local entropy estimates, this provides a 
useful approach to obtaining local entropy bounds for geometrically 
complex classes.  We state a precise result for bracketing numbers
as will be needed in the sequel; a trivial modification of the proof
yields a version for covering numbers.  In the next section, this result
will be applied in the context of mixtures.

\begin{theorem}
\label{thm:localglobal}
Let $(X,\|\cdot\|)$ be a normed vector lattice.  Fix $T\subseteq X$
and $t_0\in T$, and let $D_0$ be as above.
Suppose that there exist $q,C_0\ge 1$ and $\varepsilon_0>0$ such that 
$$
        N_{[]}(D_0,\varepsilon)\le
        \bigg(\frac{C_0}{\varepsilon}\bigg)^q
        \quad\mbox{for every }\varepsilon\le\varepsilon_0.
$$
Choose any $d\in X$ such that $|d_t|\le d$ for all $t\in T$, $t\ne t_0$.
Then
$$
        N_{[]}(T\cap B(t_0,\delta),\rho) \le 
        \bigg(
        \frac{8C\delta}{\rho}
        \bigg)^{q+1}
$$
for all $\delta,\rho>0$ such that $\rho/\delta < 4 \wedge 2\|d\|$,
where $C = C_0(1 \vee \|d\|/4\varepsilon_0)$.
\end{theorem}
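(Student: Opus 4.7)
The plan is to exploit the polar decomposition $t = t_0 + r\,d_t$ with $r = \|t-t_0\| \in (0,\delta]$ and $d_t \in D_0$, constructing brackets for $T \cap B(t_0,\delta)$ by combining a one-dimensional grid in the radial parameter $r$ with a bracketing of $D_0$ supplied by the hypothesis. The extra ``radial'' dimension is responsible for the exponent $q+1$ in the conclusion.

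The core geometric step would be a Riesz-space identity. Using the order relations $l^+ \le d_t^+ \le u^+$ and $u^- \le d_t^- \le l^-$ that hold in any bracket $l \le d_t \le u$ of $D_0$, I would show that for $r \in [s_1, s_2]$ with $0 \le s_1 \le s_2$,
$$
  s_1 l^+ - s_2 l^- \;\le\; r\,d_t \;\le\; s_2 u^+ - s_1 u^-,
$$
and a direct expansion yields the identity
$$
  (s_2 u^+ - s_1 u^-) - (s_1 l^+ - s_2 l^-) = (s_2 - s_1)(u^+ + l^-) + s_1(u - l).
$$
Truncating each bracket to $[l \vee (-d),\, u \wedge d]$, which only shrinks its size and still contains $d_t$, then forces $u^+ + l^- \le 2d$; hence the induced bracket for $t$ has norm at most $2\|d\|(s_2 - s_1) + s_1 \|u - l\|$.

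With this in hand I would slice $[0, \delta]$ arithmetically at roughly the step $\rho/(4\|d\|)$, producing $N \asymp \|d\|\delta/\rho$ shells. On the innermost shell a single bracket $[t_0 - r_1 d,\, t_0 + r_1 d]$ (of size $\le \rho$ by the choice of step) suffices; on each outer shell $[r_k, r_{k+1}]$ I would cover $D_0$ with brackets of size $\eta_k = \rho/(2 r_k)$, so that the induced brackets for $t$ have size $\le \rho/2 + \rho/2 = \rho$. To apply the hypothesis at these scales I would first upgrade it to $N_{[]}(D_0, \eta) \le (4C/\eta)^q$ for every $\eta \in (0, \|d\|]$, distinguishing $\eta \le \varepsilon_0$ (direct) from $\varepsilon_0 < \eta \le \|d\|$ (monotonicity in $\eta$, with the extra factor $\|d\|/\varepsilon_0$ absorbed by the very definition $C = C_0(1 \vee \|d\|/(4\varepsilon_0))$). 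The hypothesis $\rho/\delta < 2\|d\|$ is exactly what keeps the relevant $\eta_k$'s below $\|d\|$.

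Summing over shells, the total count is at most $1 + (8C/\rho)^q \sum_k r_k^q$, and the arithmetic progression of the $r_k$ gives $\sum_k r_k^q \lesssim \|d\|\,\delta^{q+1}/\rho$. The main arithmetic obstacle is absorbing the leading factor $\|d\|$ into the target $(8C\delta/\rho)^{q+1}$: this reduces to a comparison of the form $\|d\| \lesssim (q+1)\,C$, which holds because the non-triviality of the bracketing hypothesis (since $N_{[]}(D_0, \varepsilon_0) \ge 1$ forces $C_0 \ge \varepsilon_0$) combined with $C \ge C_0\|d\|/(4\varepsilon_0)$ yields $C \ge \|d\|/4$. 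The restrictions $\rho/\delta < 4 \wedge 2\|d\|$ enter precisely to keep the slicing non-degenerate and to guarantee that the additive $+1$ coming from the innermost shell is dominated by the target bound.
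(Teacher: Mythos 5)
Your decomposition $t = t_0 + r\,d_t$ and slicing over the radial parameter is the same underlying idea as the paper's proof, but you use \emph{arithmetic} slicing (step $\approx\rho/4\|d\|$) where the paper uses \emph{geometric} slicing (ratio $r=4/(4-\rho/\delta)$, with $H=\log(2\|d\|\delta/\rho)/\log r$ levels). Your core bracketing step is also essentially the paper's bracket $[s_1 l\wedge s_2 l,\; s_1 u \vee s_2 u]$ (written in terms of positive and negative parts), and your algebraic identity for its width is correct. So at the level of ideas the route is genuine and close.

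The gap is in the width estimate and the ensuing absorption, and it is not merely cosmetic. You bound $\|u^+ + l^-\|$ by $2\|d\|$ after truncating, which puts a factor $\|d\|$ in front of the $(s_2-s_1)$ term. The paper instead exploits the \emph{normalization} $\|d_t\|=1$: from $l\le d_t\le u$ one gets (e.g.) $\|s_2 u - s_1 l\|\le \|d_t\|(s_2-s_1)+s_2\|u-l\|=(s_2-s_1)+s_2\varepsilon$, so the coefficient of $(s_2-s_1)$ is $2$ rather than $2\|d\|$. (You could recover the same sharpness along your own lines via $u^+\le d_t^+ + (u-l)$ and $l^-\le d_t^- + (u-l)$, whence $\|u^+ + l^-\|\le \||d_t|\| + 2\|u-l\| = 1+2\varepsilon$.) With your cruder $2\|d\|$ the subsequent arithmetic does not close to the stated constant: after choosing $\Delta=\rho/(4\|d\|)$ and summing, the total is of the form
\begin{equation*}
1 + \frac{\|d\|}{2C(q+1)}\Bigl(\frac{8C\delta}{\rho}\Bigr)^{q+1} + \frac{1}{q+1}\Bigl(\frac{8C\delta}{\rho}\Bigr)^{q},
\end{equation*}
and using $\|d\|\le 4C$, $C\ge 1$, $\rho/\delta<4$ gives a prefactor $\tfrac14 + \tfrac{5/2}{q+1}$, which exceeds $1$ for $q=1,2$. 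In other words, ``$\|d\|\lesssim (q+1)C$'' is not tight enough once the $+1$ from the innermost shell and the ceiling in the number of shells are accounted for; the absorption genuinely fails for small $q$. The paper's geometric slicing, combined with the $\|d_t\|=1$ estimate, produces a cleanly summable geometric series and hits $(8C\delta/\rho)^{q+1}$ exactly. Your scheme would prove $(KC\delta/\rho)^{q+1}$ for some larger universal $K$, which is the right qualitative statement, but not the stated theorem. To fix the argument, replace $u^+ + l^- \le 2d$ by the $\|d_t\|=1$ estimate; with that change the arithmetic slicing constants do come out comfortably below $8$.
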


\begin{remark}
Theorem \ref{thm:localglobal} requires an upper bound $d\in X$ on 
$|D_0|$, that is, $D_0$ must be order-bounded.  But the assumptions
of the Theorem already require that $N_{[]}(D_0,\varepsilon_0)<\infty$,
which is easily seen to imply order-boundedness of $D_0$.  The latter
therefore does not need to be added as a separate assumption.
\end{remark}

\begin{remark}
In Theorem \ref{thm:localglobal}, a global covering bound for $D_0$ of order 
$(1/\varepsilon)^{q}$ gives a local covering bound for $T$ of order 
$(\delta/\varepsilon)^{q+1}$.  It is instructive to note that this 
polynomial scaling cannot be improved.  Indeed, let $T$ be the unit
(Euclidean) ball in $\mathbb{R}^{q+1}$, and let $t_0=0$.
Then $D_0$ is the unit sphere in $\mathbb{R}^{q+1}$ and therefore
has Kolmogorov dimension $q$, but the covering number of 
$B(0,\delta)$ is of order $(\delta/\varepsilon)^{q+1}$.
The same conclusion holds also for bracketing (rather than covering)
numbers.
\end{remark}

\begin{remark}
A natural question is whether a converse to the above results can be 
obtained.  In general, however, this is not possible: the class $D_0$
can be much richer than the original class $T$, as the following simple
example illustrates.  Let $(X,\|\cdot\|)$ be an infinite-dimensional Hilbert
lattice and let $(e_k)_{k\ge 1}$ be an orthonormal basis.  Let
$T=\{2^{-k}e_k:k\ge 1\}\cup\{0\}$ and $t_0=0$.  Then
$N_{[]}(T\cap B(t_0,2^{-r}),2^{-k}) \le k-r+1$ for $k\ge r$, so
$N_{[]}(T\cap B(t_0,\delta),\varepsilon) \le
\log_2(8\delta/\varepsilon) \le (8\delta/\varepsilon)^{3/2}$
for all $\varepsilon/\delta\le 1$.
But here we have $D_0 = \{e_k:k\ge 1\}$, so
$N_{[]}(D_0,\varepsilon)\ge N(D_0,\varepsilon)=\infty$ for $\varepsilon>0$ small enough.
\end{remark}

We now proceed to the proof of Theorem \ref{thm:localglobal}.  The main 
idea of the proof is to partition the set $T\cap B(t_0,\delta)$ into 
shells $\{t\in T:r^{-n}\delta\le\|t-t_0\|\le r^{-n+1}\delta\}$ for a 
suitable choice of $r>0$.  The bracketing number of each shell is then 
controlled by that of the normalized class $D_0$ at scale $\sim 
r^n\rho/\delta$.  Such a slicing procedure is commonly used in the
reverse direction in the theory of weighted empirical processes
(see, e.g., \cite[sec.\ 5.3]{vdG00}).  Here we apply this
idea directly to the bracketing numbers.

\begin{proof}[Proof of Theorem \ref{thm:localglobal}]
The assumption implies that
$$
        N_{[]}(D_0,\varepsilon)\le
        \bigg(\frac{C_0}{\varepsilon\wedge\varepsilon_0}\bigg)^q
        \quad\mbox{for every }\varepsilon>0.
$$
If $\varepsilon<\|d\|/4$, then
$$
        \frac{\varepsilon}{\varepsilon\wedge\varepsilon_0}
        \le 1\vee \frac{\|d\|}{4\varepsilon_0}.
$$
We therefore have
$$
        N_{[]}(D_0,\varepsilon)\le
        \bigg(\frac{C}{\varepsilon}\bigg)^q
        \quad\mbox{for every }\varepsilon<\|d\|/4,
$$
where $C$ is as defined in the Theorem.  This estimate will be used below.

Fix $\varepsilon,\delta>0$ and let 
$N=N_{[]}(D_0,\varepsilon)$.  Then there exist
$l_1,u_1,\ldots,l_N,u_N\in X$ such that $\|u_i-l_i\|\le\varepsilon$
for all $i=1,\ldots,N$, and for every $t\in T$, $t\ne t_0$ there is an 
$1\le i\le N$ such that
$l_i\le d_t\le u_i$.  Choose $t\in T$ such that $r^{-n}\delta\le
\|t-t_0\|\le r^{-n+1}\delta$ (with $r>1$ to be chosen later).  
Then there exists $1\le i\le N$ so that
$$
        (r^{-n}l_i\wedge r^{-n+1}l_i)\,\delta + t_0 \le
        t \le
        (r^{-n}u_i\vee r^{-n+1}u_i)\,\delta + t_0.
$$
Note that
\begin{align*}
        \|u_i\,r^{-n}\delta-l_i\,r^{-n}\delta\| &\le
        r^{-n}\delta\varepsilon,\\
        \|u_i\,r^{-n+1}\delta-l_i\,r^{-n+1}\delta\| &\le
        r^{-n+1}\delta\varepsilon,\\
        \|u_i\,r^{-n+1}\delta-l_i\,r^{-n}\delta\| &\le
        (r-1)r^{-n}\delta + r^{-n+1}\delta\varepsilon,\\
        \|u_i\,r^{-n}\delta-l_i\,r^{-n+1}\delta\| &\le
        (r-1)r^{-n}\delta + r^{-n+1}\delta\varepsilon,
\end{align*}
where the latter two estimates follow from
$l_i\le d_t\le u_i$, $\|d_t\|=1$, and
\begin{align*}
        (u_i-l_i)\,r^{-n}\delta &\le
        u_i\,r^{-n+1}\delta-l_i\,r^{-n}\delta 
        - d_t\,(r-1)r^{-n}\delta
        \le
        (u_i-l_i)\,r^{-n+1}\delta, \\
        (u_i-l_i)\,r^{-n}\delta &\le
        u_i\,r^{-n}\delta-l_i\,r^{-n+1}\delta 
        + d_t\,(r-1)r^{-n}\delta
        \le
        (u_i-l_i)\,r^{-n+1}\delta.
\end{align*}
As $|a\vee b-c\wedge d|\le |a-c|+|a-d|+|b-c|+|b-d|$, we can estimate
$$
        \|(r^{-n}u_i\vee r^{-n+1}u_i)\,\delta-
        (r^{-n}l_i\wedge r^{-n+1}l_i)\,\delta\| \le
        2(r-1)r^{-n}\delta + 4r^{-n+1}\delta\varepsilon.
$$
Therefore, we have shown that
$$
        N_{[]}(\{t\in T:r^{-n}\delta\le
        \|t-t_0\|\le r^{-n+1}\delta\},
        2(r-1)r^{-n}\delta + 4r^{-n+1}\delta\varepsilon) \le 
        N_{[]}(D_0,\varepsilon)
$$
for arbitrary $\varepsilon,\delta>0$, $r>1$, $n\in\mathbb{N}$.
In particular,
$$
        N_{[]}(\{t\in T:r^{-n}\delta\le
        \|t-t_0\|\le r^{-n+1}\delta\},\rho) \le 
        N_{[]}(D_0,\tfrac{1}{4}r^{n-1}\rho/\delta - 
        \tfrac{1}{2}(1-1/r))
$$
for every $\delta>0$, $r>1$, $n\in\mathbb{N}$, 
$\rho>2(r-1)r^{-n}\delta$.

Choose an envelope $d\in X$ such that $|d_t|\le d$ for all $t\in T$,
$t\ne t_0$.  Evidently
$$
        t_0-r^{-n}\delta\, d\le t \le t_0+r^{-n}\delta\,d
$$
for all $t\in T$ such that $\|t-t_0\|\le r^{-n}\delta$.  Therefore
$$
        N_{[]}(\{t\in T:\|t-t_0\|\le r^{-
        \lceil H\rceil}\delta\},
        2r^{-H}\delta\|d\|)=1
$$
for all $\delta>0$, $r>1$, $H>0$.  Thus we can estimate
\begin{align*}
        &N_{[]}(T\cap B(t_0,\delta),
        2r^{-H}\delta\|d\|) \\
        &\quad\le
        1+\sum_{n=1}^{\lceil H\rceil}
        N_{[]}(\{t\in T:r^{-n}\delta\le
        \|t-t_0\|\le r^{-n+1}\delta\},2r^{-H}\delta\|d\|) \\
        &\quad\le 1+\sum_{n=1}^{\lceil H\rceil}
        N_{[]}(D_0,\{r^{n-H-1}\|d\|- 
        (1-1/r)\}/2)
\end{align*}
whenever $\delta>0$, $r>1$, $H>0$ such that $\|d\| > 
(1-1/r)r^{H}$.  In particular,
$$
        N_{[]}(T\cap B(t_0,\delta),
        2r^{-H}\delta\|d\|) 
        \le 1+\sum_{n=1}^{\lceil H\rceil}
        N_{[]}(D_0,r^{n-H-1}\|d\|/4)
$$
whenever $\delta>0$, $r>1$, $H>0$ such that $\|d\| \ge
2(1-1/r)r^{H}$, where we have used that the bracketing number is
a nonincreasing function of the bracket size.

Now recall that 
$$
        N_{[]}(D_0,\varepsilon)\le
        \bigg(\frac{C}{\varepsilon}\bigg)^q
        \quad\mbox{for every }0<\varepsilon<\|d\|/4,
$$
where $q,C\ge 1$.  Thus
$$
        N_{[]}(T\cap B(t_0,\delta),
        2r^{-H}\delta\|d\|) \le 1+
        \sum_{n=1}^{\lceil H\rceil}r^{-(n-1)q}
        \bigg(
        \frac{8C}{2r^{-H}\|d\|}
        \bigg)^q
$$
whenever $\delta>0$, $r>1$, $H>0$ such that $\|d\| \ge
2(1-1/r)r^{H}$.  But
$$
        \sum_{n=1}^{\lceil H\rceil}r^{-(n-1)q} \le
        \frac{1}{1-1/r^q} \le
        \frac{1}{1-1/r} \le
        \frac{\|d\|}{2(1-1/r)r^{H}}
        \frac{4C}{2r^{-H}\|d\|}
$$
as $r>1$ and $q,C\ge 1$.  We can therefore estimate
$$
        N_{[]}(T\cap B(t_0,\delta),
        2r^{-H}\delta\|d\|) \le 
        \frac{\|d\|}{2(1-1/r)r^{H}}
        \bigg(
        \frac{8C}{2r^{-H}\|d\|}
        \bigg)^{q+1}
$$
whenever $\delta>0$, $r>1$, $H>0$ such that $\|d\| \ge
2(1-1/r)r^{H}$.

We now fix $\delta,\rho>0$ such that $\rho/\delta < 4\wedge 2\|d\|$,
and choose
$$
        r = \frac{4}{4-\rho/\delta},\qquad
        H = \frac{\log(2\|d\|\delta/\rho)}{\log r}.
$$
Clearly $r>1$ and $H>0$.  Moreover, note that our choice of $r$ and $H$ 
implies that $\|d\|=2(1-1/r)r^H$ and $\rho=2r^{-H}\delta\|d\|$.  We
have therefore shown that
$$
        N_{[]}(T\cap B(t_0,\delta),
        \rho) \le 
        \bigg(
        \frac{8C\delta}{\rho}
        \bigg)^{q+1}
$$
for all $\delta,\rho>0$ such that $\rho/\delta < 4\wedge 2\|d\|$.
\end{proof}

\section{The local entropy of mixtures}
\label{sec:mixtures}

\subsection{Definitions and main results}
\label{sec:mixturesTh}

Let $\mu$ be the Lebesgue measure on $\mathbb{R}^d$. We 
fix a positive probability density $f_0$ with respect to $\mu$
($f_0>0$ and $\int f_0 d\mu=1$), 
and consider mixtures (finite convex combinations) of densities 
in the class
$$
        \{
        f_\theta:\theta\in\mathbb{R}^d
        \},\qquad
        f_\theta(x) = f_0(x-\theta)\quad\forall\,x\in\mathbb{R}^d.
$$
In everything that follows we fix a nondegenerate mixture $f^\star$ 
of the form
$$
        f^\star=\sum_{i=1}^{q^\star}\pi_i^\star f_{\theta_i^\star}.
$$
Nondegenerate means that $\pi_i^\star>0$ for all $i$, and
$\theta_i^\star\ne\theta_j^\star$ for all $i\ne j$.  

Let $\Theta\subset\mathbb{R}^d$ be a bounded parameter set such that
$\{\theta^\star_i:i=1,\ldots,q^\star\}\subseteq\Theta$, and denote 
its diameter by $2T$ (that is, $\Theta$ is included in some closed 
Euclidean ball of radius $T$).  We consider for $q\ge 1$ the 
family of $q$-mixtures
$$
        \mathcal{M}_q = \bigg\{\sum_{i=1}^q\pi_if_{\theta_i}:
        \pi_i\ge 0,~\sum_{i=1}^q\pi_i=1,~\theta_i\in\Theta\bigg\}.
$$
The goal of this section is to obtain a local entropy bound for
$\mathcal{M}_q$ at the point $f^\star$, where $\mathcal{M}_q$ is
endowed with the Hellinger metric
$$
        h(f,g) = \bigg[\int \big(\textstyle{\sqrt{f}-\sqrt{g}}
	\big)^2d\mu\bigg]^{1/2},
	\qquad f,g\in\mathcal{M}_q.
$$ 
That is, we seek bounds on quantities such as 
$N_h(\{f\in\mathcal{M}_q:h(f,f^\star)\le \varepsilon\},\delta)$, where 
$N_h$ denotes the covering number in the metric space 
$(\mathcal{M}_q,h)$ (i.e., covering by Hellinger balls). In fact, we 
prove a stronger bound of bracketing type. Our choice of the Hellinger 
metric and the particular form of the bracketing number to be considered 
is directly motivated by statistical applications \cite[ch.\ 7]{vdG00}, 
\cite[\S 7.4]{Mas07}; see \cite{GasHanOrder,GasRouHMM} for 
statistical applications of the results below.  We will adhere to this 
setting for concreteness, though other metrics may similarly be 
considered.

In the sequel, we denote by $\|\cdot\|_p$ the $L^p(f^\star 
d\mu)$-norm, that is, $\|g\|_p^p = \int |g|^p f^\star d\mu$. Note that 
the Hellinger metric can be written as $h(f,g) = 
\|\sqrt{f/f^\star}-\sqrt{g/f^\star}\|_2$.  To obtain covering bounds
for $\mathcal{M}_q$ in the Hellinger metric, we can therefore apply
the results of section \ref{sec:entropy} for the case where 
$(X,\|\cdot\|)$ is the Banach lattice $(L^2(f^\star d\mu),\|\cdot\|_2)$,
$T=\{\sqrt{f/f^\star}:f\in\mathcal{M}_q\}$, and $t_0=1$.  Indeed,
it is easily seen that\footnote{
	It is an artefact of our definitions that
	the centers of the balls that define the minimal cover of cardinality
	$N(\mathcal{H}_q(\varepsilon),\delta)$ need not lie in the
	set $\{\sqrt{f/f^\star}:f\in\mathcal{M}_q\}$, while the centers
	of the balls in the minimal cover associated to
	$N_h(\{f\in\mathcal{M}_q:h(f,f^\star)\le \varepsilon\},\delta)$
	must lie in $\mathcal{M}_q$.  This accounts for the additional
	factor $2$ in the inequality
	$N_h(\{f\in\mathcal{M}_q:h(f,f^\star)\le \varepsilon\},2\delta)\le
	N(\mathcal{H}_q(\varepsilon),\delta)$.
}
$$
	N_h(\{f\in\mathcal{M}_q:h(f,f^\star)\le \varepsilon\},2\delta) \le
	N(\mathcal{H}_q(\varepsilon),\delta) \le
	N_{[]}(\mathcal{H}_q(\varepsilon),\delta),
$$
where we have defined
$$
	\mathcal{H}_q(\varepsilon)=
	\{\sqrt{f/f^\star}:f\in\mathcal{M}_q,~ 
	\|\sqrt{f/f^\star}-1\|_2\le\varepsilon\}
	\subset L^2(f^\star d\mu).
$$
Our aim is to obtain a polynomial bound for the bracketing number
$N_{[]}(\mathcal{H}_q(\varepsilon),\delta)$.  To this end,
we will apply Theorem \ref{thm:localglobal} to the weighted class
$\mathcal{D}_q$ defined by
$$
	\mathcal{D}_q=\{d_f:f\in\mathcal{M}_q,~f\ne f^\star\},
	\qquad\quad
	d_f = \frac{\sqrt{f/f^\star}-1}{\|\sqrt{f/f^\star}-1\|_2}.
$$
The essential difficulty is now to control the global entropy of
$\mathcal{D}_q$.

The following notation will be used throughout:
\begin{align*}
        H_{0}(x)&=\sup_{\theta\in\Theta}f_{\theta}(x)/f^{\star}(x),\\
        H_{1}(x)&=\sup_{\theta\in\Theta}
        \max_{i=1,\ldots,d}| 
        \partial f_\theta(x) / \partial\theta^i|/f^{\star}(x),\\        H_{2}(x)&=\sup_{\theta\in\Theta}
        \max_{i,j=1,\ldots,d}| 
        \partial^{2} f_\theta(x) / \partial\theta^i\partial\theta^j|/f^{\star}(x),
        \\        H_{3}(x)&=
        \sup_{\theta\in\Theta}\max_{i,j,k=1,\ldots,d}| 
        \partial^{3} f_\theta(x) / \partial\theta^i\partial\theta^j\partial\theta^k|/f^{\star}(x)
\end{align*}
when $f_0$ is sufficiently differentiable,
$\mathcal{M}=\bigcup_{q\ge 1}\mathcal{M}_q$, and
$\mathcal{D}=\bigcup_{q\ge 1}\mathcal{D}_q$.

\begin{aspta}
The following hold:
\begin{enumerate}
\item $f_0\in C^3$ and $f_0(x)$, $(\partial f_0/\partial\theta^i)(x)$ vanish as 
$\|x\|\to\infty$.
\item $H_{k}\in L^{4}(f^{\star}d\mu)$ for $k=0,1,2$ and $H_{3}\in L^{2}(f^{\star}d\mu)$.
\end{enumerate} 
\end{aspta}

We can now state our main result, whose proof is given in section 
\ref{sec:proofmainmixtures}.

\begin{theorem}
\label{thm:mainmixtures}
Suppose that Assumption A holds.  Then there exist constants
$C^\star$ and $\delta^\star$, which depend on $d$, $q^\star$ and $f^\star$
but not on $\Theta$, $q$ or $\delta$, such that
$$
        N_{[]}(\mathcal{D}_{q},\delta) \le
        \bigg(
        \frac{
        C^\star (T\vee 1)^{1/3} 
        (\|H_0\|_4^4\vee\|H_1\|_4^4\vee\|H_2\|_4^4\vee\|H_3\|_2^2)
        }{\delta}
        \bigg)^{10(d+1)q} 
$$
for all $q\ge q^\star$, $\delta\le\delta^\star$.
Moreover, there is a function $D\in L^4(f^\star d\mu)$ with
$$
        \|D\|_4 \le K^\star (\|H_0\|_4\vee\|H_1\|_4\vee\|H_2\|_4),
$$
where $K^\star$ depends only on $d$ and $f^\star$, such that 
$|d|\le D$ for all $d\in\mathcal{D}$.
\end{theorem}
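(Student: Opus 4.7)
The proof splits into two parallel tasks: constructing an $L^4(f^\star d\mu)$ envelope for $\mathcal{D}$, and bounding the bracketing entropy of $\mathcal{D}_q$. Both rest on the identity $d_f = [(f - f^\star)/f^\star]/[(\sqrt{f/f^\star} + 1)\,h(f,f^\star)]$ combined with a second-order Taylor expansion of each component $f_{\theta_j}/f^\star$ around a nearby $\theta_i^\star$. The local geometry Theorem~\ref{thm:localgeom} supplies the crucial lower bound $h(f,f^\star) \gtrsim N(\pi,\theta)$ that allows us to neutralize the small denominator $h(f,f^\star)$ whenever it is near $0$.

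For the envelope, the pointwise inequality $f/f^\star \le H_0$ gives $|\sqrt{f/f^\star}-1|\le \sqrt{H_0}+1$, which suffices whenever $h(f,f^\star)\ge \eta$ for some fixed $\eta>0$. When $h(f,f^\star)<\eta$, we invoke Theorem~\ref{thm:localgeom}: its neighborhoods $A_1,\dots,A_{q^\star}$ of $\theta_1^\star,\dots,\theta_{q^\star}^\star$ make the Taylor coefficients of $f_{\theta_j}/f^\star$ (for $\theta_j\in A_i$) precisely the summands of $N(\pi,\theta)$, while the Taylor remainders are dominated pointwise by $H_2$. The lower bound $h\gtrsim N$ then normalizes these coefficients, yielding $|d_f|\lesssim H_0+H_1+H_2$ pointwise, and the $L^4$ envelope follows.

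For the entropy bound we again split $\mathcal{D}_q$ according to whether $h(f,f^\star)$ exceeds or falls below $\eta$. In the far regime $d_f$ is a bounded multiple of $\sqrt{f/f^\star}-1$, which is Lipschitz in $(\pi,\theta)\in \Delta_{q-1}\times\Theta^q$ in the $L^2(f^\star d\mu)$ norm (Lipschitz constant controlled by $\|H_0\|_4\vee\|H_1\|_4$ via the mean value theorem and Cauchy--Schwarz); a direct grid on $\Xi_q$ produces $(C/\delta)^{q(d+1)}$ brackets. In the near regime, the Taylor expansion yields
\[
\frac{f-f^\star}{f^\star} = \sum_{i=1}^{q^\star}\Big[\alpha_i\, g_i^{(0)} + \langle \beta_i,\, g_i^{(1)}\rangle + \tfrac12\langle\gamma_i,\, g_i^{(2)}\rangle\Big] + \sum_{\theta_j\in A_0}\pi_j\,\frac{f_{\theta_j}}{f^\star} + \mathrm{Rem},
\]
where $g_i^{(0)},g_i^{(1)},g_i^{(2)}$ are the \emph{fixed} $\le 1+d+d^2$ zeroth-, first-, and second-order partials of $f_\theta/f^\star$ at $\theta_i^\star$; the coefficients $\alpha_i,\beta_i,\gamma_i$ are exactly the entries of $N(\pi,\theta)$; and $\mathrm{Rem}$ is a third-order remainder controlled pointwise by $H_3$. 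Dividing by $h(f,f^\star)$ and using $h\gtrsim N$, the normalized coefficients and outlier weights $\pi_j/h$ are uniformly bounded. Thus $d_f$ lies in the sum of (i) a bounded linear combination of $q^\star(1+d+d^2)$ fixed functions, bracketable at cost $(C/\delta)^{q^\star(1+d+d^2)}$ with metric constants governed by $\|H_0\|_4\vee\|H_1\|_4\vee\|H_2\|_4$; (ii) at most $q$ outlier terms $c_j f_{\theta_j}/f^\star$ with bounded $c_j$ and $\theta_j\in \Theta$, each bracketable at cost $(C/\delta)^{d+1}$ by the same Lipschitz argument; and (iii) a negligible $H_3$-remainder absorbed into bracket widths. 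Multiplying these counts and using $q\ge q^\star$ consolidates both contributions into the single exponent $10(d+1)q$.

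The main technical obstacle is the interplay between $L^2$ (in which brackets are measured and Theorem~\ref{thm:localgeom} is phrased) and $L^4$ (needed to control bilinear cross-terms arising from both the Taylor remainders and the coefficient discretizations via Cauchy--Schwarz). This forces the $L^4$ hypothesis on $H_0,H_1,H_2$ and the $L^2$ hypothesis on $H_3$, and produces the fourth powers $\|H_0\|_4^4\vee\|H_1\|_4^4\vee\|H_2\|_4^4$ together with the second power $\|H_3\|_2^2$ in the final constant.
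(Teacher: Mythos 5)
Your overall architecture closely parallels the paper's: an envelope via Taylor expansion combined with the local geometry lower bound (Theorem~\ref{thm:localgeom}), then a split of $\mathcal{D}_q$ into a near-$f^\star$ and a far-from-$f^\star$ regime, each handled separately. However, there is a genuine gap in the near-regime analysis. You Taylor-expand $f_{\theta_j}/f^\star$ around $\theta_i^\star$ for \emph{every} $j$ with $\theta_j\in A_i$, and then claim the third-order remainder $\mathrm{Rem}$, after dividing by $h(f,f^\star)$, is ``negligible'' and can be absorbed into bracket widths. This is not justified: the remainder for index $j$ scales as $\pi_j\|\theta_j-\theta_i^\star\|^3 H_3$, while $h(f,f^\star)\gtrsim\sum\pi_j\|\theta_j-\theta_i^\star\|^2$, so the ratio $\mathrm{Rem}/h$ is of order $\|\theta_j-\theta_i^\star\|H_3$, bounded only by $\mathrm{diam}(A_i)\cdot H_3$ --- a \emph{fixed}-size quantity that does not shrink with the threshold on $h$. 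Since you ultimately need the approximation error to vanish as $\delta\downarrow 0$ (the error must eat into the $\delta$-bracket width), this argument cannot close. The paper resolves this in Lemma~\ref{lem:deriv} by introducing the index set $J$ of $j$'s with $\|\theta_j-\theta_i^\star\|^2\le 2\sqrt{\alpha/c^\star}$: only those $j\in J$ are Taylor-expanded (yielding remainder $O(\alpha^{1/4})$), while the remaining small-weight $j\in A_i\setminus J$ are demoted to outlier terms $\gamma_j f_{\theta_j}/f^\star$ alongside those in $A_0$. Without this separation your brackets cannot be made $\delta$-small.

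Two further, less fatal, issues. First, in the far regime you assert that $\sqrt{f/f^\star}-1$ is \emph{Lipschitz} in $(\pi,\theta)$ in $L^2(f^\star d\mu)$, but the square root forces only H\"older-$\tfrac12$ continuity (the paper's Step 2 uses $|\sqrt{a}-\sqrt{b}|\le|a-b|^{1/2}$, producing an $\alpha^{-2}\delta^{-2}$ rather than $\delta^{-1}$ scale); this still yields a polynomial bound but alters the exponent bookkeeping. Second, you do not track how the coefficient bounds in the Taylor representation blow up as the near-regime threshold shrinks (in the paper, $\sum|\eta_i|,\sum|\gamma_j|\lesssim\alpha^{-1/2}$, etc.), nor the rank constraint $\sum_i\mathrm{rank}[\rho_i]\le q\wedge dq^\star$ that keeps the effective parameter dimension proportional to $q$ rather than $q^\star d^2$; both are needed to land on the explicit exponent $10(d+1)q$ and the stated dependence on $T$.
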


\begin{remark}
Assumption A is essentially a smoothness assumption on $f_0$.  Some
sort of smoothness is certainly needed for a result such as
Theorem \ref{thm:mainmixtures} to hold: see \cite[\S 3]{Ciu02} for
a counterexample in the non-smooth case.
\end{remark}

Combining Theorems 
\ref{thm:localglobal} and \ref{thm:mainmixtures}, we immediately obtain 
a local entropy bound.

\begin{theorem}
\label{thm:localmixtures}
Suppose that Assumption A holds.  Then
$$
        N_{[]}(\mathcal{H}_{q}(\varepsilon),\delta) \le
        \bigg(
        \frac{C_\Theta\,\varepsilon}{\delta}
        \bigg)^{10(d+1)q+1} 
$$
for all $q\ge q^\star$ and $\delta/\varepsilon\le 1$, where
$$
        C_\Theta = L^\star\, (T\vee 1)^{1/3} \,
        (\|H_0\|_4^4\vee\|H_1\|_4^4\vee\|H_2\|_4^4\vee\|H_3\|_2^2)^{5/4}
$$
and $L^\star$ is a constant that depends only on $d$, $q^\star$ and $f^\star$.
\end{theorem}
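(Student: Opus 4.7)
The plan is to combine the global bracketing bound of Theorem \ref{thm:mainmixtures} with the abstract slicing device of Theorem \ref{thm:localglobal}; the substantive work has already been done in those two results, so what remains is to set up the correspondence and track constants.

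First I would cast the problem in the abstract framework of Section \ref{sec:entropy}. Take $(X,\|\cdot\|)$ to be the Banach lattice $(L^2(f^\star\,d\mu),\|\cdot\|_2)$, set $t_0 = 1$ (the constant function), and $T = \{\sqrt{f/f^\star}:f\in\mathcal{M}_q\}$. Then the ball $T\cap B(t_0,\varepsilon)$ is exactly $\mathcal{H}_q(\varepsilon)$, and the normalized class $D_0$ constructed in Theorem \ref{thm:localglobal} is tautologically $\mathcal{D}_q$ by the very definition of $d_f$.

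Next I would feed the two conclusions of Theorem \ref{thm:mainmixtures} into Theorem \ref{thm:localglobal}. The global bracketing estimate supplies the hypothesis with dimension parameter $10(d+1)q$, threshold $\varepsilon_0 = \delta^\star$, and prefactor $C_0 = C^\star (T\vee 1)^{1/3} M$, where $M = \|H_0\|_4^4\vee\|H_1\|_4^4\vee\|H_2\|_4^4\vee\|H_3\|_2^2$. The envelope $D\in L^4(f^\star d\mu)$ dominates every element of $\mathcal{D}_q\subset \mathcal{D}$, and since $f^\star d\mu$ is a probability measure Jensen gives $\|D\|_2 \le \|D\|_4 \le K^\star M^{1/4}$. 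Before invoking Theorem \ref{thm:localglobal} I would check its range condition $\delta/\varepsilon < 4 \wedge 2\|D\|_2$: every $d\in\mathcal{D}_q$ has unit $L^2$-norm by construction and is pointwise dominated by $D$, so $\|D\|_2 \ge 1$, and combined with the assumption $\delta/\varepsilon \le 1$ the condition is automatic.

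Theorem \ref{thm:localglobal} then yields
$$
N_{[]}(\mathcal{H}_q(\varepsilon),\delta)\le\bigl(8C\varepsilon/\delta\bigr)^{10(d+1)q+1},\qquad C = C_0\bigl(1\vee \|D\|_2/(4\delta^\star)\bigr),
$$
which already produces the announced exponent $10(d+1)q+1$. The remaining task is cosmetic: repackage $8C$ into the form $C_\Theta = L^\star (T\vee 1)^{1/3} M^{5/4}$. Since $f^\star \le \max_i f_{\theta_i^\star}/\pi_i^\star$ forces $\|H_0\|_4 \ge 1$ and hence $M\ge 1$, and since $\|D\|_2 \le K^\star M^{1/4}$, one has $C \lesssim (T\vee 1)^{1/3} M\cdot M^{1/4} = (T\vee 1)^{1/3} M^{5/4}$, with the implicit constant depending only on $C^\star$, $K^\star$, $\delta^\star$, hence only on $d$, $q^\star$, $f^\star$. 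No geometric ideas beyond Theorems \ref{thm:mainmixtures} and \ref{thm:localglobal} are needed; the only point requiring care is this final constant-chase and the verification of the range hypothesis of Theorem \ref{thm:localglobal}.
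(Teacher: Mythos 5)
Your proposal is correct and follows precisely the route the paper itself takes: Theorem \ref{thm:localmixtures} is obtained by plugging the global bracketing bound and envelope of Theorem \ref{thm:mainmixtures} (with $X=L^2(f^\star d\mu)$, $t_0=1$, $T=\{\sqrt{f/f^\star}:f\in\mathcal{M}_q\}$, so $D_0=\mathcal{D}_q$ and $T\cap B(t_0,\varepsilon)=\mathcal{H}_q(\varepsilon)$) into the slicing Theorem \ref{thm:localglobal}. Your verification of the range condition via $\|D\|_2\ge 1$ (each $d_f$ has unit norm and is dominated by $D$) and the constant-chase using $M\ge 1$ and $\|D\|_2\le\|D\|_4\le K^\star M^{1/4}$ is exactly the bookkeeping the paper leaves implicit in the phrase ``immediately obtain.''
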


To illustrate these results, let us consider the important case of 
Gaussian location mixtures, which are widely used in applications
(see, e.g., \cite{GW00,GvdV01,MM11}).

\begin{example}[Gaussian mixtures]
\label{ex:gaussian}
Consider mixtures of standard Gaussian densities
$f_0(x)=(2\pi)^{-d/2}e^{-\|x\|^2/2}$, and let
$\Theta(T) = \{\theta\in\mathbb{R}^d:\|\theta\|\le T\}$.  Fix a nondegenerate
mixture $f^\star$, and define $T^\star=
\max_{i=1,\ldots,q^\star}\|\theta_i^\star\|$.  Denote by
$\mathcal{H}_q(\varepsilon,T)$ the Hellinger ball associated to the
parameter set $\Theta(T)$.  Then 
$$
        N_{[]}(\mathcal{H}_{q}(\varepsilon,T),\delta) \le
        \bigg(
        \frac{C_1^\star e^{C_2^\star T^2}\varepsilon}{\delta}
        \bigg)^{10(d+1)q+1} 
$$
for all $q\ge q^\star$, $T\ge T^\star$, and $\delta/\varepsilon\le 1$,
where $C_1^\star,C_2^\star$ are constants that depend on $d$, $q^\star$ and
$f^\star$ only.  To prove this, it evidently suffices to show that
Assumption A holds and that $\|H_k\|_4$ for $k=0,1,2$ and $\|H_3\|_2$
are of order $e^{CT^2}$.  These facts are readily verified
by a straightforward computation.
\end{example}

Let us emphasize a key feature of Theorems \ref{thm:mainmixtures} and 
\ref{thm:localmixtures}: the dependence of the entropy bounds on the 
order $q$ and on the parameter set $\Theta$ is explicit (see, e.g.,
Example \ref{ex:gaussian}).  In particular, we find that for every 
$f^\star$, the local doubling dimension of $\mathcal{M}_q$ at $f^\star$ 
is of the same order as the dimension of the natural parameter set for 
mixtures $\Delta_{q-1}\times\Theta^q$, which answers the basic question
posed in the introduction.  Obtaining this explicit dependence, which is
important in applications \cite{GasHanOrder}, is one of the main
technical challenges of the proof.  In order to show only
that $N_{[]}(\mathcal{H}_{q}(\varepsilon),\delta)$ is polynomial
in $\varepsilon/\delta$ without explicit control of the order, the proof
could be simplified and substantially generalized---see Remark 
\ref{rem:mixother} below for some discussion.
In contrast to the dependence on $q$ and $\Theta$, however, the proofs of 
Theorems \ref{thm:mainmixtures} and \ref{thm:localmixtures} do not 
provide any control of the dependence of the constants on $f^\star$. In 
particular, while we can control the local doubling dimension of 
$\mathcal{M}_q$ at $f^\star$ in terms of $q$, we do not know whether the 
dependence on $f^\star$ can be eliminated.

\begin{remark}
We have not optimized the constants in 
Theorem \ref{thm:mainmixtures} and Theorem \ref{thm:localmixtures}.
In particular, the constant $10$ in the exponent can likely be improved.
On the other hand, it is unclear whether the dependence on the diameter of 
$\Theta$ is optimal.  Indeed, if one is only interested in
global entropy $N_{[]}(\mathcal{H}_q,\delta)$ where
$\mathcal{H}_q=\{\sqrt{f/f^\star}:f\in\mathcal{M}_q\}$, then it can be
read off from the proof of Theorem \ref{thm:mainmixtures} that the constants
in the entropy bound depend on $\|H_0\|_1$ and $\|H_1\|_1$ only, which are
easily seen to scale polynomially in $T$ due to the translation invariance 
of the Lebesgue measure.  Therefore, for example in the case of Gaussian
mixtures, one can obtain a \emph{global} entropy bound which scales only
polynomially as a function of $T$, whereas the above \emph{local} entropy
bound scales as $e^{CT^2}$.  The behavior of local entropies is much more
delicate than that of global entropies, however, and we do not know whether 
it is possible to obtain a local entropy bound that scales polynomially in $T$
for the Hellinger metric.  On the other hand, if $\mathcal{M}_q$ is endowed
with the total variation metric $d_{\rm TV}(f,g)=\int |f-g|d\mu$ rather than
the Hellinger metric, then an easy modification of our proof 
yields a local entropy bound that depends only on
$\|H_i\|_1$ ($i=0,\ldots,3$), and therefore scales polynomially in $T$.
In this case the scaling matches that of the
global entropy, and is therefore optimal.
\end{remark}

\begin{remark} 
\label{rem:mixother}
The problems that we address in this section could be investigated 
in a more general setting. Let $\mathcal{F}=\{f_\theta:\theta\in\Theta\}$
be a given family of probability densities (where 
$\Theta$ is a bounded subset of $\mathbb{R}^d$), and define
$$
	\mathcal{M}_q = \bigg\{
	\sum_{i=1}^q\pi_if_{\theta_i}:\pi_i\ge 0,~
	\sum_{i=1}^q\pi_i=1,~\theta_i\in\Theta
	\bigg\}.
$$
The case that we have considered corresponds to the choice
$\mathcal{F}=\{f_0(\,\cdot\,-\theta):\theta\in\Theta\}$,
but in principle any parametrized family $\mathcal{F}$ may be considered.

Remarkably, most of the proof of Theorem \ref{thm:mainmixtures} does not 
rely at all on the specific choice of $\mathcal{F}$, so that very 
similar techniques may be used to study more general mixtures. The only 
point where the structure of $\mathcal{F}$ has been used is in the local 
geometry Theorem \ref{thm:localgeom} below, whose proof (using Fourier 
methods) relies on the specific form of location mixtures.  We believe 
that essentially the same result holds more generally, but a different
method of proof would likely be needed.

The proof of Theorem \ref{thm:localgeom} below is rather technical: the 
difficulty lies in the fact that the result holds uniformly in the order 
$q$. This is necessary in order to obtain bounds in Theorems 
\ref{thm:mainmixtures} and \ref{thm:localmixtures} that depend 
explicitly on $q$.  If the explicit dependence on $q$ is not needed, 
then our proof of Theorem \ref{thm:localgeom} can be simplified and 
adapted to hold for much more general classes $\mathcal{F}$, see 
\cite{GasRouHMM}. 

Finally, we note that $\mathcal{M}=\bigcup_q\mathcal{M}_q$ is simply the 
convex hull of $\mathcal{F}$.  The problem of estimating the metric 
entropy of convex hulls has been widely studied 
\cite{CKP99,Gao01,Gao04,GW00,GvdV01}.  In general, however, the convex 
hull is infinite-dimensional, so that this problem is quite distinct 
from the problems we have considered.
\end{remark}

\begin{remark}
Weighted entropy bounds as in Theorem \ref{thm:mainmixtures} are of 
independent interest.  A qualitative version of this bound (without 
uniform control in $q$ and $T$) was assumed in \cite{DCG99}, 
which provided inspiration for the present effort.  However, in 
\cite[Prop.\ 3.1]{DCG99}, it is assumed without justification that one 
can choose a multiplicative rather than additive remainder term in a 
Taylor expansion.  The requisite justification is provided (in a much 
more precise form) by the local geometry theorem to be described 
presently.  Developing a precise understanding of the local geometry
of mixtures is the fundamental challenge to be surmounted in our
setting, and our
local geometry result therefore constitutes the central contribution 
of this paper. 
\end{remark}

\subsection{The local geometry of mixtures}
\label{sec:mixturesGeo}

At the heart of the proof of Theorem \ref{thm:mainmixtures} lies a 
result on the local geometry of location mixtures, Theorem 
\ref{thm:localgeom} below.  Before we can develop this result, we
must introduce some notation.

Define the Euclidean balls $B(\theta,\varepsilon)=\{\theta'\in\mathbb{R}^d: 
\|\theta-\theta'\|<\varepsilon\}$, denote by $\langle u,v\rangle$ the 
inner product of two vectors $u,v\in\mathbb{R}^d$, and denote by 
$\langle A,u\rangle= \{\langle\theta,u\rangle:\theta\in A\}\subseteq\mathbb{R}$
the inner product of a set $A\subseteq\mathbb{R}^d$ with a vector 
$u\in\mathbb{R}^d$.

\begin{lemma}
\label{lem:bubble}
It is possible to choose a bounded convex neighborhood $A_i$ of 
$\theta_i^\star$ for every $i=1,\ldots,q^\star$ such that, for some
linearly independent family $u_1,\ldots,u_d\in\mathbb{R}^d$,
the sets $\{\langle A_i,u_j\rangle:i=1,\ldots,q^\star\}$ are disjoint
for every $j=1,\ldots,d$.
\end{lemma}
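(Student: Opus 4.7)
The plan is to first select a basis $u_1,\dots,u_d$ of $\mathbb{R}^d$ whose coordinates separate the finitely many points $\theta_1^\star,\dots,\theta_{q^\star}^\star$, and then simply take Euclidean balls $A_i = B(\theta_i^\star,\rho)$ of sufficiently small radius $\rho$; the separation of the point coordinates $\langle\theta_i^\star,u_j\rangle$ will then propagate by continuity to the intervals $\langle A_i,u_j\rangle$.

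For the first step, I would use a genericity argument. By nondegeneracy, $\theta_i^\star\ne\theta_k^\star$ whenever $i\ne k$, so $H_{ik}:=\{u\in\mathbb{R}^d:\langle\theta_i^\star-\theta_k^\star,u\rangle=0\}$ is a proper linear hyperplane. Set $U:=\mathbb{R}^d\setminus\bigcup_{i<k}H_{ik}$; this is open and dense, and for every $u\in U$ the numbers $\langle\theta_1^\star,u\rangle,\dots,\langle\theta_{q^\star}^\star,u\rangle$ are pairwise distinct. I would then pick $u_1,\dots,u_d\in U$ inductively so that $\{u_1,\dots,u_j\}$ is linearly independent at each stage: at step $j$, the span of the previously chosen vectors is a proper subspace with empty interior, whereas $U$ is open and dense, so one can pick $u_j\in U$ outside this span.

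For the second step, given such $u_1,\dots,u_d$, let $m:=\min_{j\le d}\min_{i\ne k}|\langle\theta_i^\star-\theta_k^\star,u_j\rangle|$, which is strictly positive by construction, and take $\rho:=m/(3\max_j\|u_j\|)$. Define $A_i:=B(\theta_i^\star,\rho)$, which is a bounded convex neighborhood of $\theta_i^\star$. For any $\theta\in A_i$ and any $j$, Cauchy--Schwarz gives $|\langle\theta-\theta_i^\star,u_j\rangle|<\rho\|u_j\|\le m/3$, so $\langle A_i,u_j\rangle$ is contained in the open interval of radius $m/3$ about $\langle\theta_i^\star,u_j\rangle$. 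By the definition of $m$ these intervals are pairwise disjoint in $i$ for each fixed $j$, which is exactly the conclusion of the lemma.

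The only nontrivial point is the genericity step producing the basis, which reduces to the standard observation that the complement of finitely many proper hyperplanes in $\mathbb{R}^d$ is open and dense and hence contains a basis; the quantitative step of shrinking the balls is routine.
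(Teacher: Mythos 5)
Your proof is correct and follows essentially the same strategy as the paper's: a genericity argument (complement of finitely many proper hyperplanes) to find a linearly independent family $u_1,\ldots,u_d$ separating the points $\langle\theta_i^\star,u_j\rangle$, followed by a uniformly small ball radius chosen so the projected intervals stay disjoint. The only cosmetic difference is that the paper produces the basis all at once by drawing a Haar-random rotation and setting $u_i = Te_i$ (unit vectors, hence no need to normalize the ball radius by $\|u_j\|$), whereas you build the basis inductively and compensate by dividing by $\max_j\|u_j\|$; both implementations of the genericity step are standard and equally valid.
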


\begin{proof}
We first claim that one can choose linearly independent
$u_1,\ldots,u_d$ such that $|\{\langle\theta_i^\star,u_j\rangle:
i=1,\ldots,q^\star\}|=q^\star$ for every $j=1,\ldots,d$.  Indeed,
note that the set
$\{u\in\mathbb{R}^d:|\{\langle\theta_i^\star,u\rangle:
i=1,\ldots,q^\star\}|<q^\star\}$ is a finite union of $(d-1)$-dimensional
hyperplanes, which has Lebesgue measure zero.  Therefore, if we draw
a rotation matrix $T$ at random from the Haar measure on
$\mathrm{SO}(d)$, and let $u_i=Te_i$ for all $i=1,\ldots,d$ 
where $\{e_1,\ldots,e_d\}$ is the standard Euclidean basis in
$\mathbb{R}^d$, then the desired property will hold with unit
probability.  To complete the proof, it suffices to choose
$A_i = B(\theta_i^\star,\varepsilon/4)$ with
$\varepsilon = \min_k\min_{i\ne j}|\langle\theta_i^\star-
\theta_j^\star,u_k\rangle|$.
\end{proof}

We now fix once and for all a family of neighborhoods 
$A_1,\ldots,A_{q^\star}$ as in Lemma \ref{lem:bubble}.  The precise 
choice of these sets only affects the constants in the proofs below and 
is therefore irrelevant to our final result; we only presume that 
$A_1,\ldots,A_{q^\star}$ remain fixed throughout the proofs.  Let us 
also define $A_0=\mathbb{R}^d\backslash(A_1\cup\cdots\cup A_{q^\star})$. 
Then $\{A_0,\ldots,A_{q^\star}\}$ partitions the parameter set 
$\mathbb{R}^d$ in such a way that each bounded element $A_i$, 
$i=1,\ldots,q^\star$ contains precisely one component of the mixture 
$f^\star$, while the unbounded element $A_0$ contains no components of 
$f^\star$.

%

Let us define for each finite measure $\lambda$ on
$\mathbb{R}^d$ the function
$$
        f_{\lambda}(x) = \int f_\theta(x)\,\lambda(d\theta).
$$
We also define the derivatives $D_1f_\theta(x)\in\mathbb{R}^d$ and
$D_2f_\theta(x)\in\mathbb{R}^{d\times d}$ as
$$
	[D_1f_\theta(x)]_{i} = \frac{\partial}{\partial\theta^i}f_\theta(x),
	\qquad
	[D_2f_\theta(x)]_{ij} = \frac{\partial^2}{\partial\theta^i
	\partial\theta^j}f_\theta(x).
$$
Denote by $\mathfrak{P}(A)$ the space of probability measures supported
on $A\subseteq\mathbb{R}^d$, and denote by $M_+^d$ the family of all 
$d\times d$ positive semidefinite (symmetric) matrices.

\begin{definition}
Let us write
\begin{multline*}
        \mathfrak{D} = \{(\eta,\beta,\rho,\tau,\nu):
	\eta_1,\ldots,\eta_{q^\star}\in\mathbb{R},~
	\beta_1,\ldots,\beta_{q^\star}\in\mathbb{R}^d,~
	\rho_1,\ldots,\rho_{q^\star}\in M_+^d,\\
	\tau_0,\ldots,\tau_{q^\star}\ge 0,~
        \nu_0\in\mathfrak{P}(A_0),\ldots,
        \nu_{q^\star}\in\mathfrak{P}(A_{q^\star})\}.
\end{multline*}
Then we define for each $(\eta,\beta,\rho,\tau,\nu)\in\mathfrak{D}$
the function
$$
        \ell(\eta,\beta,\rho,\tau,\nu) =
        \tau_0\frac{f_{\nu_0}}{f^\star}+
        \sum_{i=1}^{q^\star}\bigg\{
        \eta_i\frac{f_{\theta_i^\star}}{f^\star}
        +\beta_i^*\frac{D_1f_{\theta_i^\star}}{f^\star}
        +\mathrm{Tr}\bigg[\rho_i\frac{D_2f_{\theta_i^\star}}{f^\star}
		\bigg]
        +\tau_i\frac{f_{\nu_i}}{f^\star}
        \bigg\},
$$
and the nonnegative quantity
\begin{multline*}
        N(\eta,\beta,\rho,\tau,\nu) = \tau_0+ 
	\sum_{i=1}^{q^\star}|\eta_i+\tau_i| +
        \sum_{i=1}^{q^\star}
        \bigg\|\beta_i+\tau_i\int(\theta-\theta_i^\star)
                \,\nu_i(d\theta)\bigg\| +
	\mbox{} \\
        \sum_{i=1}^{q^\star} \mathrm{Tr}[\rho_i]
        + \sum_{i=1}^{q^\star}
	\frac{\tau_i}{2}\int\|\theta-\theta_i^\star\|^2\nu_i(d\theta).
\end{multline*}
\end{definition}

We now formulate the key result on the local geometry of the mixture 
class $\mathcal{M}$.

\begin{theorem}
\label{thm:localgeom}
Suppose that 
\begin{enumerate}
\item $f_0\in C^2$ and $f_0(x)$, $D_1f_0(x)$ vanish as $\|x\|\to\infty$.
\item $\|[D_1f_0]_i/f^\star\|_1<\infty$ and $\|[D_2f_0]_{ij}/f^\star\|_1<\infty$
for all $i,j=1,\ldots,d$.
\end{enumerate}
Then there exists a constant $c^\star>0$ such that
$$
        \|\ell(\eta,\beta,\rho,\tau,\nu)\|_1 \ge
        c^\star\,N(\eta,\beta,\rho,\tau,\nu)\quad
        \mbox{for all }(\eta,\beta,\rho,\tau,\nu)\in\mathfrak{D}.
$$
\emph{[}The constant $c^\star$ may depend on $f^\star$ and 
$A_1,\ldots,A_{q^\star}$ but not on $\eta,\beta,\rho,\tau,\nu$.\emph{]}
\end{theorem}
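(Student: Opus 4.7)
The plan is a contradiction argument based on compactness and Fourier analysis. Every term in $\ell$ and $N$ is linear in $(\eta,\beta,\rho,\tau)$ with the $\nu_i$ held fixed, so by homogeneity it suffices to show $\inf\{\|\ell\|_1 : N = 1\} > 0$. Suppose not: there is a sequence $(\eta^{(n)},\beta^{(n)},\rho^{(n)},\tau^{(n)},\nu^{(n)}) \in \mathfrak{D}$ with $N^{(n)} = 1$ and $\|\ell^{(n)}\|_1 \to 0$. Since $N^{(n)} = 1$ uniformly bounds every scalar, extracting a subsequence produces limits $\bar\eta_i,\bar\beta_i,\bar\rho_i,\bar\tau_i$. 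Each $A_i$ ($i\ge 1$) being bounded, a further subsequence yields weak limits $\bar\nu_i \in \mathfrak{P}(\overline{A_i})$, and the uniformly mass-bounded finite measures $\mu_0^{(n)} := \tau_0^{(n)}\nu_0^{(n)}$ admit (Banach--Alaoglu) a vague limit $\bar\mu_0$ supported in $\overline{A_0}$ of total mass at most $\bar\tau_0$ (mass may escape to infinity).

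Assumption (1) puts $\theta \mapsto f_0(x-\theta)$ in $C_0(\mathbb{R}^d)$ for every $x$, so vague convergence with bounded total mass gives $f_{\mu_0^{(n)}}(x) \to f_{\bar\mu_0}(x)$ pointwise, and similarly for the $\bar\nu_i$. Hence $\ell^{(n)} \to \bar\ell$ pointwise, where
$$
\bar\ell\cdot f^\star = f_{\bar\mu_0} + \sum_{i=1}^{q^\star}\bigl[\bar\eta_i f_{\theta_i^\star} + \bar\beta_i^* D_1 f_{\theta_i^\star} + \mathrm{Tr}[\bar\rho_i D_2 f_{\theta_i^\star}] + \bar\tau_i f_{\bar\nu_i}\bigr].
$$
Since $\|\ell^{(n)}\|_1 \to 0$ forces a subsequence to converge to $0$ a.e., and $\bar\ell$ is continuous (as $f^\star>0$, $f_0\in C^2$, and $\bar\mu_0$ is finite), one has $\bar\ell \equiv 0$. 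Taking Fourier transforms produces the identity $\hat f_0(\xi)\,\Phi(\xi) = 0$ with
$$
\Phi(\xi) = \hat{\bar\mu_0}(\xi) + \sum_{i=1}^{q^\star} e^{-i\langle\theta_i^\star,\xi\rangle}\bigl[\bar\eta_i - i\langle\bar\beta_i,\xi\rangle - \langle\bar\rho_i\xi,\xi\rangle\bigr] + \sum_{i=1}^{q^\star} \bar\tau_i \hat{\bar\nu_i}(\xi).
$$

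Continuity of $\hat f_0$ with $\hat f_0(0)=1$ gives $\hat f_0\neq 0$ on some open ball $B(0,r)$, so $\Phi \equiv 0$ on $B(0,r)$. All terms of $\Phi$ except $\hat{\bar\mu_0}$ are entire functions of $\xi\in\mathbb{C}^d$ (polynomials times exponentials and Fourier transforms of compactly supported measures), so $\hat{\bar\mu_0}|_{B(0,r)}$ agrees with an entire function; this forces $\bar\mu_0$ to possess exponential moments and $\Phi\equiv 0$ to extend to all of $\mathbb{R}^d$ by analytic continuation. Since $|\hat{\bar\mu_0}|$ is bounded on $\mathbb{R}^d$, so must be the right-hand side of
$$
\hat{\bar\mu_0}(\xi) = -\sum_i e^{-i\langle\theta_i^\star,\xi\rangle}\bigl[\bar\eta_i - i\langle\bar\beta_i,\xi\rangle - \langle\bar\rho_i\xi,\xi\rangle\bigr] - \sum_i \bar\tau_i \hat{\bar\nu_i}(\xi).
$$
Restricting $\xi=tu_j$ with the $u_j$ of Lemma \ref{lem:bubble}, the frequencies $\langle\theta_i^\star,u_j\rangle$ are pairwise distinct; the polynomial-times-exponential part $\sum_i[it\langle\bar\beta_i,u_j\rangle + t^2\langle\bar\rho_i u_j,u_j\rangle]e^{-it\langle\theta_i^\star,u_j\rangle}$ must then be bounded in $t$, and the standard fact that a bounded sum $\sum_k p_k(t)e^{i\lambda_k t}$ with distinct real $\lambda_k$ forces each $p_k$ constant yields $\langle\bar\beta_i,u_j\rangle = 0$ and $\langle\bar\rho_i u_j,u_j\rangle = 0$. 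Linear independence of the $u_j$ and $\bar\rho_i\in M_+^d$ then give $\bar\beta_i=0$ and $\bar\rho_i=0$ for every $i$.

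The remaining identity $\hat{\bar\mu_0} = -\sum_i[\bar\eta_i e^{-i\langle\theta_i^\star,\cdot\rangle} + \bar\tau_i\hat{\bar\nu_i}]$ together with uniqueness of the Fourier transform gives $\bar\mu_0 + \sum_i[\bar\eta_i\delta_{\theta_i^\star} + \bar\tau_i\bar\nu_i] = 0$ as finite signed measures. Restricting to $A_i$ (disjoint from $\overline{A_0}$ and from $\overline{A_{i'}}$, $i'\ne i$) forces $\bar\tau_i\bar\nu_i|_{A_i\setminus\{\theta_i^\star\}}=0$ and $\bar\eta_i + \bar\tau_i\bar\nu_i(\{\theta_i^\star\})=0$; restricting to $\overline{A_0}$, positivity balances to give $\bar\mu_0=0$ and $\bar\tau_i\bar\nu_i|_{\partial A_i}=0$. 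Hence $\bar\nu_i = \delta_{\theta_i^\star}$ whenever $\bar\tau_i>0$, and $\bar\eta_i+\bar\tau_i=0$ in either case, so every term of $N$ other than $\tau_0$ has limit $0$. Rewriting $\eta_i^{(n)}f_{\theta_i^\star} + \tau_i^{(n)}f_{\nu_i^{(n)}} = (\eta_i^{(n)}+\tau_i^{(n)})f_{\theta_i^\star} + \tau_i^{(n)}(f_{\nu_i^{(n)}} - f_{\theta_i^\star})$, the first piece has $L^1$-norm $|\eta_i^{(n)}+\tau_i^{(n)}|\to 0$, and the second has $L^1$-norm $\tau_i^{(n)}\|f_{\nu_i^{(n)}} - f_{\theta_i^\star}\|_{L^1(d\mu)}\to 0$ by Scheff\'e's lemma applied to the pointwise-convergent densities $f_{\nu_i^{(n)}}\to f_{\theta_i^\star}$; together with Assumption (2) controlling the $\beta_i,\rho_i$ summands this gives $\|\ell^{(n)} - \tau_0^{(n)}f_{\nu_0^{(n)}}/f^\star\|_1\to 0$. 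The triangle inequality then forces $\|\ell^{(n)}\|_1 \to \bar\tau_0$, hence $\bar\tau_0=0$; but then $N^{(n)}\to 0$, contradicting $N^{(n)}=1$. The chief technical obstacle is the Fourier step: extending $\Phi\equiv 0$ to all of $\mathbb{R}^d$ despite the possibly unbounded support of $\bar\mu_0$, and extracting the vanishing of $\bar\beta_i,\bar\rho_i$ via the polynomial--exponential boundedness argument.
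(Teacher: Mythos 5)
Your contradiction-and-Fourier outline is the right one, and several of your Fourier-side steps (extending $\Phi\equiv 0$ by analyticity, killing $\bar\beta_i,\bar\rho_i$ via boundedness of an exponential polynomial, Fourier uniqueness of measures) are morally close to the paper's. But the compactness step is broken, and it is precisely the degeneracy the functional $N$ is built around. You assert that $N^{(n)}=1$ ``uniformly bounds every scalar'' so that $\eta_i^{(n)},\beta_i^{(n)},\tau_i^{(n)}$ have convergent subsequences. Inspecting $N$, it controls $\tau_0$, $\mathrm{Tr}[\rho_i]$, and only the \emph{combinations} $|\eta_i+\tau_i|$, $\|\beta_i+\tau_i\int(\theta-\theta_i^\star)\,\nu_i(d\theta)\|$, and $\tfrac{\tau_i}{2}\int\|\theta-\theta_i^\star\|^2\nu_i(d\theta)$. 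Take, for example, $\nu_i^{(n)}$ concentrating at distance $h_n\downarrow 0$ from $\theta_i^\star$ with $\tau_i^{(n)}\asymp h_n^{-2}$: the quadratic combination stays of order one, so $N^{(n)}$ can remain $1$ while $\tau_i^{(n)}\to\infty$, $\eta_i^{(n)}\approx-\tau_i^{(n)}\to-\infty$, and $\beta_i^{(n)}$ also escapes (to cancel $\tau_i^{(n)}\int(\theta-\theta_i^\star)\nu_i^{(n)}$). Then your $\bar\eta_i,\bar\tau_i,\bar\beta_i$ do not exist, $\bar\ell$ is undefined, and every downstream step (pointwise limit, Fourier transform, boundedness of $\hat{\bar\mu}_0$, Scheff\'e, passage of $N$-terms to zero) collapses. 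Note also that this is not a pathology you can exclude: $\mathfrak{D}$ genuinely allows it, and it is the regime that matters in the application (mixtures collapsing onto a component of $f^\star$).

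The paper circumvents exactly this by Taylor-expanding $\tau_i f_{\nu_i}/f^\star$ to second order around $\theta_i^\star$ \emph{before} normalizing, so that the parameters enter only through the $N$-combinations; dividing by $N^n$ then produces genuinely bounded coefficients $a_i^n,b_i^n,c_i^n,d_i^n$ which sum to $1$, and the Taylor remainder's second-moment information is packaged as a matrix-valued probability measure (a ``state'') $\lambda_i^n$ on the compact set $\cl A_i$, for which weak compactness holds. The subsequent Fourier analysis is run on the limit of \emph{those} objects, and requires a more delicate inductive step (Proposition \ref{prop:finduction}) to kill the coefficients direction by direction, because the quantity $\Phi_i^\ell$ coming from the state is no longer a pure polynomial but an analytic function with positivity structure. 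Your proposal is missing this reparametrization, which is the central structural idea of the proof; without it the argument does not get off the ground.
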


Before we turn to the proof, let us introduce a notion that is familiar 
in quantum mechanics.  If $(\Omega,\Sigma)$ is a measurable space, 
call the map $\lambda:\Sigma\to\mathbb{R}^{d\times d}$ a \emph{state}\footnote{
	Our terminology is in analogy with the notion of a
	state on the $C^*$-algebra $\mathbb{C}^{d\times d}\otimes
        C_{\mathbb{C}}(\Omega)$, where $\Omega$ is a compact metric space and
	$C_{\mathbb{C}}(\Omega)$ is the algebra of complex-valued continuous 
	functions on $\Omega$.  Such states can be
	represented by the complex-valued counterpart of our definition.
} if 
\begin{enumerate}
\item $A\mapsto[\lambda(A)]_{ij}$ is a signed measure for
every $i,j=1,\ldots,d$;
\item $\lambda(A)$ is a nonnegative symmetric matrix for every $A\in\Sigma$;
\item $\mathrm{Tr}[\lambda(\Omega)]=1$.
\end{enumerate}
It is easily seen that for any unit vector $\xi\in\mathbb{R}^d$, the
map $A\mapsto\langle\xi,\lambda(A)\xi\rangle$ is a sub-probability measure.
Moreover, if $\xi_1,\ldots,\xi_d\in\mathbb{R}^d$ are linearly independent,
there must be at least one $\xi_i$ such that
$\langle\xi_i,\lambda(\Omega)\xi_i\rangle>0$.  Finally, let 
$B\subset\mathbb{R}^d$ be a compact set and let $(\lambda_n)_{n\ge 0}$
be a sequence of states on $B$.  Then there exists a subsequence along
which $\lambda_n$ converges weakly to some state $\lambda$ on $B$
in the sense that $\int \mathrm{Tr}[M(\theta)\lambda_n(d\theta)]
\to \int \mathrm{Tr}[M(\theta)\lambda(d\theta)]$
for every continuous function $M:B\to\mathbb{R}^{d\times d}$.
To see this, it suffices to note that we may extract a subsequence
such that all matrix elements $[\lambda_n]_{ij}$ converge weakly
to a signed measure by the compactness of $B$, and it is evident
that the limit must again define a state.

\begin{proof}[Proof of Theorem \ref{thm:localgeom}]
Suppose that the conclusion of the theorem does not hold.  Then there must 
exist a sequence of coefficients 
$(\eta^{n},\beta^{n},\rho^{n},\tau^{n},\nu^{n})\in\mathfrak{D}$ with
$$
	\frac{\|\ell(\eta^{n},\beta^{n},\rho^{n},\tau^{n},\nu^{n})\|_1}{
	N(\eta^{n},\beta^{n},\rho^{n},\tau^{n},\nu^{n})}
	\xrightarrow{n\to\infty}0.
$$
Let us fix such a sequence throughout the proof.

Applying Taylor's theorem to 
$u\mapsto f_{\theta_i^\star+u(\theta-\theta_i^\star)}$, we can write
for $i=1,\ldots,q^\star$
\begin{align*}
        & \eta_{i}^{n}\frac{f_{\theta_{i}^\star}}{f^\star}
        + \beta_{i}^{n*} \frac{D_{1}f_{\theta_{i}^\star}}{f^\star} 
        + \mathrm{Tr}\bigg[\rho_{i}^{n}
	\frac{D_{2}f_{\theta_{i}^\star}}{f^\star}
	\bigg]
        + \tau_{i}^{n} \frac{f_{\nu_{i}^{n}}}{f^\star} 
        \\ &\mbox{}=
        \left(\eta_{i}^{n}+\tau_{i}^{n}\right)
        \frac{f_{\theta_{i}^\star}}{f^\star}
        + \bigg(\beta_{i}^{n}+\tau_{i}^{n} \int (\theta-\theta_{i}^\star)
        \,\nu_{i}^{n}(d\theta)\bigg)^*
        \frac{D_{1}f_{\theta_{i}^\star}}{f^\star} 
        + \mathrm{Tr}\bigg[\rho_{i}^{n}
	\frac{D_{2}f_{\theta_{i}^\star}}{f^\star}
	\bigg]
        \\ &\quad\mbox{}
        + \frac{\tau_{i}^{n}}{2}\int \|\theta-\theta_{i}^\star\|^{2}\,
        \nu_{i}^{n}(d\theta) \int \mathrm{Tr}\bigg[
	\bigg\{\int_{0}^{1} 
        \frac{D_{2}f_{\theta_i^\star+u(\theta-\theta_{i}^\star)}}{f^\star}\,
        2(1-u)\,du\bigg\} \,\lambda_{i}^{n}(d\theta)\bigg]
\end{align*}
where $\lambda_{i}^{n}$ is the state on $A_i$ defined by
$$
        \int \mathrm{Tr}[M(\theta)\,\lambda_{i}^{n}(d\theta)] = 
        \frac{\int \mathrm{Tr}[M(\theta)\,(\theta-\theta_{i}^\star)
	(\theta-\theta_{i}^\star)^*]\,
        \nu_{i}^{n}(d\theta)}{
        \int\|\theta-\theta_{i}^\star\|^{2}\,\nu_{i}^{n}(d\theta)}
$$
(it is clearly no loss of generality to assume that $\nu_i^n$ has
no mass at $\theta_i^\star$ for any $i,n$, so that everything is well 
defined).  We now define the coefficients
$$
        \displaylines{
        a_{i}^{n} = \frac{\eta_{i}^{n} + \tau_{i}^{n}}{
        N(\eta^{n},\beta^{n},\rho^{n},\tau^{n},\nu^{n})},\qquad
        b_{i}^{n} = \frac{\beta_{i}^{n} +
        \tau_{i}^{n}\int(\theta-\theta_{i}^\star)\,\nu_{i}^{n}(d\theta)}{
        N(\eta^{n},\beta^{n},\rho^{n},\tau^{n},\nu^{n})},
        \cr
        c_{i}^{n} = \frac{\rho_{i}^{n}}{
        N(\eta^{n},\beta^{n},\rho^{n},\tau^{n},\nu^{n})},\qquad
        d_{i}^{n} = \frac{\frac{\tau_{i}^{n}}{2}
        \int\|\theta-\theta_{i}^\star\|^{2}\,\nu_{i}^{n}(d\theta)}{
        N(\eta^{n},\beta^{n},\rho^{n},\tau^{n},\nu^{n})}
        }
$$
for $i=1,\ldots,q^\star$, and
$$
	a_0^n = \frac{\tau_{0}^{n}}{
        N(\eta^{n},\beta^{n},\rho^{n},\tau^{n},\nu^{n})}.
$$
Note that
$$
        |a_0^n| + \sum_{i=1}^{q^\star}\left\{|a_{i}^{n}|+\|b_{i}^{n}\|
        +\mathrm{Tr}[c_{i}^{n}]+|d_{i}^{n}|\right\}=1
$$
for all $n$.  We may therefore extract a subsequence such that:
\begin{enumerate}
\item There exist $a_i\in\mathbb{R}$, $b_i\in\mathbb{R}^d$, 
$c_i\in M_+^d$, and $a_0,d_i\ge 0$ (for $i=1,\ldots,q^\star$) with
$|a_0|+\sum_{i=1}^{q^\star}\left\{|a_{i}|+\|b_{i}\|+\mathrm{Tr}[c_{i}]
+|d_{i}|\right\}=1$, such that $a_0^n\to a_0$ and $a_i^n\to a_i$, 
$b_i^n\to b_i$, $c_i^n\to c_i$, $d_i^n\to d_i$ as $n\to\infty$ 
for all $i=1,\ldots,q^\star$.
\item There exists a sub-probability measure $\nu_0$ supported on
$A_0$, such that $\nu_0^n$ converges vaguely to $\nu_0$ as $n\to\infty$.
\item There exist states $\lambda_i$ supported on 
$\mathop{\mathrm{cl}}A_i$ for $i=1,\ldots,q^\star$, such that $\lambda_i^n$ 
converges weakly to $\lambda_i$ as $n\to\infty$ for every $i=1,\ldots,q^\star$.
\end{enumerate}
The functions 
$\ell(\eta^{n},\beta^{n},\rho^{n},\tau^{n},\nu^{n}) / 
N(\eta^{n},\beta^{n},\rho^{n},\tau^{n},\nu^{n})$ converge
pointwise along this subsequence to the function $h/f^\star$ defined by
\begin{multline*}
        h = a_0\,f_{\nu_0} +
        \sum_{i=1}^{q^\star}\bigg\{
        a_i\,f_{\theta_{i}^\star}
        + b_i^*\,D_{1}f_{\theta_{i}^\star} 
        + \mathrm{Tr}[c_i\,D_{2}f_{\theta_{i}^\star}] \\ \mbox{}
        + d_i\int \mathrm{Tr}\bigg[\bigg\{\int_{0}^{1} 
        D_{2}f_{\theta_i^\star+u(\theta-\theta_{i}^\star)}\,
        2(1-u)\,du\bigg\} \,\lambda_i(d\theta)\bigg]
      \,  \bigg\}.
\end{multline*}
But as $\|\ell(\eta^{n},\beta^{n},\rho^{n},\tau^{n},\nu^{n})\|_1 /
N(\eta^{n},\beta^{n},\rho^{n},\tau^{n},\nu^{n})\to 0$, we have
$\|h/f^\star\|_1=0$ by Fatou's lemma.  As $f^\star$ is strictly positive, 
we must have $h\equiv 0$.

To proceed, we need the following lemma.

\begin{lemma}
\label{lem:flem}
The Fourier transform $F[h](s):=\int e^{\ii\langle x,s\rangle}h(x)dx$
is given by
\begin{multline*}
        F[h](s) = F[f_0](s)\, \bigg[
	a_0\int e^{\ii\langle\theta,s\rangle}\,\nu_0(d\theta) + 
        \sum_{i=1}^{q^\star}\bigg\{
        a_i\,e^{\ii\langle\theta_{i}^\star,s\rangle}
        + \ii\langle b_i,s\rangle\,e^{\ii\langle\theta_{i}^\star,s\rangle} 
	\mbox{} \\
        - \langle s,c_i s\rangle\,e^{\ii\langle\theta_{i}^\star,s\rangle}
        - d_i\,e^{\ii\langle\theta_{i}^\star,s\rangle}
        \int \phi(\ii\langle\theta-\theta_{i}^\star,s\rangle)\,
	\langle s,\lambda_i(d\theta)s\rangle
        \bigg\}\bigg]
\end{multline*}
for all $s\in\mathbb{R}^d$.
Here we defined the function $\phi(u) = 2(e^u-u-1)/u^2$.
\end{lemma}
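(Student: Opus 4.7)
The plan is to Fourier-transform $h$ term by term, converting every parameter derivative of $f_\theta$ into a spatial derivative via the identities $D_1 f_\theta(x)=-\nabla_x f_\theta(x)$ and $D_2 f_\theta(x)=\nabla_x\nabla_x^{*} f_\theta(x)$, which are immediate from $f_\theta(x)=f_0(x-\theta)$. Combined with the translation rule $F[f_\theta](s)=e^{\ii\langle\theta,s\rangle}F[f_0](s)$ and the standard Fourier--derivative correspondence $F[\partial/\partial x^j g](s)=-\ii s^j F[g](s)$, this reduces the whole computation to an elementary bookkeeping exercise; interchanges of integration will be justified at the end.

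First I would handle the ``linear'' summands. Fubini gives $F[a_0 f_{\nu_0}](s)=a_0 F[f_0](s)\int e^{\ii\langle\theta,s\rangle}\,\nu_0(d\theta)$. For the terms anchored at $\theta_i^\star$, direct application of the rules above yields $F[a_i f_{\theta_i^\star}](s)=a_i e^{\ii\langle\theta_i^\star,s\rangle}F[f_0](s)$, $F[b_i^*\,D_1 f_{\theta_i^\star}](s)=\ii\langle b_i,s\rangle\,e^{\ii\langle\theta_i^\star,s\rangle}F[f_0](s)$, and $F[\mathrm{Tr}[c_i D_2 f_{\theta_i^\star}]](s)=-\langle s,c_i s\rangle\,e^{\ii\langle\theta_i^\star,s\rangle}F[f_0](s)$. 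The vanishing of $f_0$ and $D_1 f_0$ at infinity kills the boundary terms in the underlying integration by parts, and the $L^1$ hypotheses on $[D_1 f_0]_i/f^\star$ and $[D_2 f_0]_{ij}/f^\star$ legitimize the whole manipulation.

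The substantive step is the double-integral $d_i$-term. Writing $\theta(u)=\theta_i^\star+u(\theta-\theta_i^\star)$ and using Fubini to interchange the $x$-, $u$- and $\lambda_i$-integrations, the translation rule together with the spatial-derivative identity for $D_2$ expresses the $(j,k)$ entry of the inner $u$-integral as
\[
 -s^j s^k\,F[f_0](s)\,e^{\ii\langle\theta_i^\star,s\rangle}\int_0^1 2(1-u)\,e^{\ii u\langle\theta-\theta_i^\star,s\rangle}\,du.
\]
A short integration by parts in $u$ evaluates $\int_0^1 2(1-u)e^{uz}\,du=2(e^z-z-1)/z^2=\phi(z)$ with $z=\ii\langle\theta-\theta_i^\star,s\rangle$. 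Contracting with the state $\lambda_i$ converts $\sum_{j,k}s^j s^k[\lambda_i(d\theta)]_{jk}$ into $\langle s,\lambda_i(d\theta)s\rangle$ and produces the final summand $-d_i\,e^{\ii\langle\theta_i^\star,s\rangle}\int\phi(\ii\langle\theta-\theta_i^\star,s\rangle)\,\langle s,\lambda_i(d\theta)s\rangle$, as claimed.

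The main obstacle is purely the justification of these interchanges of integration and the boundary-free integration by parts in $x$. Assumption (1) on the decay of $f_0$ and $D_1 f_0$ at infinity eliminates all boundary contributions, while assumption (2) supplies the $L^1$ integrability of the relevant derivatives that is needed to apply Fubini uniformly inside the $\nu_0$-, $\nu_i$- and $\lambda_i$-integrations (and, in particular, the $u$-integral, which is over a compact interval with uniformly bounded integrand). The matrix-valued measure $\lambda_i$ poses no difficulty, since for every fixed $s$ the scalar measure $\langle s,\lambda_i(\cdot)s\rangle$ is a finite positive measure by the very definition of a state. Once these interchanges are granted, assembling the contributions gives the displayed formula.
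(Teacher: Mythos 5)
Your proposal is correct and follows essentially the same route as the paper: integration by parts (via the translation rule and the identities $D_1 f_\theta = -\nabla_x f_\theta$, $D_2 f_\theta = \nabla_x\nabla_x^* f_\theta$) for the $a_i,b_i,c_i$ terms, then Fubini plus the elementary computation $\int_0^1 2(1-u)e^{uz}\,du=\phi(z)$ for the $d_i$ term, with the absolute integrability needed for Fubini supplied by $\|[D_2 f_0]_{jk}\|_{L^1(\mu)}<\infty$ and the finiteness of the total-variation measure $|[\lambda_i]_{kj}|$. The only cosmetic difference is that the paper justifies Fubini by directly exhibiting the finite triple integral of absolute values, whereas you describe the same bound in words.
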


\begin{proof}
The $a_i,b_i,c_i$ terms are easily computed using integration by parts. 
It remains to compute the Fourier transform of the function
$$
        [\Xi_i(x)]_{jk} = 
        \int \bigg\{\int_{0}^{1}
        [D_{2}f_{\theta_i^\star+u(\theta-\theta_{i}^\star)}(x)]_{jk}\,
        2(1-u)\,du\bigg\} \,[\lambda_i(d\theta)]_{kj}.
$$
We begin by noting that
\begin{multline*}
        \int \int \int_{0}^{1}
        |[D_{2}f_{\theta_i^\star+u(\theta-\theta_{i}^\star)}(x)]_{jk}|\,
        2(1-u)\,du\,dx\,|[\lambda_i]_{kj}|(d\theta) = \\
        \|[\lambda_i]_{kj}\|_{\rm TV}\int |[D_2f_0(x)]_{jk}|\,dx < \infty.
\end{multline*}
We may therefore apply Fubini's theorem, giving
\begin{align*}
        F[[\Xi_i]_{jk}](s) &= 
	- F[f_0](s)\,
	s_j s_k\, e^{\ii\langle\theta_i^\star,s\rangle}
        \int\bigg\{\int_{0}^{1}
        e^{\ii u\langle\theta-\theta_{i}^\star,s\rangle}
        2(1-u) du\bigg\} [\lambda_i(d\theta)]_{kj} \\ \mbox{}
        &= 
	- F[f_0](s)\,
	s_j s_k\,e^{\ii\langle\theta_i^\star,s\rangle}
        \int \phi(\ii\langle\theta-\theta_i^\star,s\rangle)\,
	[\lambda_i(d\theta)]_{kj},
\end{align*}
where we have computed the inner integral using integration by parts.
\end{proof}

Let $u_1,\ldots,u_d\in\mathbb{R}^d$ be a linearly independent family
satisfying the condition of Lemma \ref{lem:bubble}.
As $F[h](s)=0$ for all $s\in\mathbb{R}^d$, we obtain
$$
        \Phi^\ell(\ii t) :=
	a_0\,\Phi_0^\ell(\ii t)+
        \sum_{i=1}^{q^\star}
	e^{\ii t\langle\theta_{i}^\star,u_\ell\rangle}
	\big\{
        a_i + \ii t \langle b_i,u_\ell\rangle 
	- t^2 \langle u_\ell,c_i u_\ell\rangle 
	- d_i\, t^2\, \Phi_i^\ell(\ii t)
        \big\} = 0
$$
for all $\ell=1,\ldots,d$ and $t\in[-\iota,\iota]\subset\mathbb{R}$
for some $\iota>0$, where we defined
$$
        \Phi_i^\ell(\ii t) = 
        \int \phi(\ii t\langle\theta-\theta_{i}^\star,u_\ell\rangle)\,
	\langle u_\ell,\lambda_i(d\theta)u_\ell\rangle
$$
for $i=1,\ldots,q^\star$, and
$$
	\Phi_0^\ell(\ii t) = 
	\int e^{\ii t\langle\theta,u_\ell\rangle}\,\nu_0(d\theta).
$$
Indeed, it suffices to note that $F[f_0](0)=1$ and that $s\mapsto 
F[f_0](s)$ is continuous, so that this claim follows from Lemma 
\ref{lem:flem} and the fact that $F[f_0](s)$ is nonvanishing in a 
sufficiently small neighborhood of the origin.

As all $\lambda_i$ have compact support, it is easily seen that for every 
$i=1,\ldots,q^\star$, the function $\Phi_i^\ell(z)$ is defined for all 
$z\in\mathbb{C}$ by a convergent power series.  The function 
$\Psi^\ell(\ii t):=\Phi^\ell(\ii t)-a_0\,\Phi_0^\ell(\ii t)$ is 
therefore an entire function with $|\Psi^\ell(z)|\le k_1e^{k_2|z|}$ for 
some $k_1,k_2>0$ and all $z\in\mathbb{C}$.  But as $\Phi^\ell(\ii t)=0$ 
for $t\in[-\iota,\iota]$, it follows from \cite{Luk70}, Theorem 7.2.2 
that $a_0\,\Phi_0^\ell(\ii t)$ is the Fourier transform of a finite 
measure with compact support. Thus we may assume without loss of 
generality that the law of $\langle\theta,u_\ell\rangle$ under the 
sub-probability $\nu_0$ is compactly supported for every 
$\ell=1,\ldots,d$, so by linear independence $\nu_0$ must be compactly 
supported.  Therefore, the function $\Phi^\ell(z)$ is defined for all 
$z\in\mathbb{C}$ by a convergent power series.  But as $\Phi^\ell(z)$ 
vanishes for $z\in\ii[-\iota,\iota]$, we must have $\Phi^\ell(z)=0$ for 
all $z\in\mathbb{C}$, and in particular
\begin{equation}
\label{eq:laplace}
        \Phi^\ell(t) =
	a_0\,\Phi_0^\ell(t)+
        \sum_{i=1}^{q^\star}
	e^{t\langle\theta_{i}^\star,u_\ell\rangle}
	\big\{
        a_i + t \langle b_i,u_\ell\rangle 
	+ t^2 \langle u_\ell,c_i u_\ell\rangle + d_i\, t^2\, \Phi_i^\ell(t)
        \big\} = 0
\end{equation}
for all $t\in\mathbb{R}$ and $\ell=1,\ldots,d$.
In the remainder of the proof, we argue that (\ref{eq:laplace}) can not 
hold, thus completing the proof by contradiction.

At the heart of our proof is an inductive argument.  Recall that
by construction, the projections $\{\langle A_i,u_\ell\rangle:
i=1,\ldots,q^\star\}$ are disjoint open intervals in $\mathbb{R}$
for every $\ell=1,\ldots,d$.
We can therefore relabel them in increasing order: that is, define
$(\ell1),\ldots,(\ell q^\star)\in\{1,\ldots,q^\star\}$ so that
$\langle \theta_{(\ell1)}^\star,u_\ell\rangle <
\langle \theta_{(\ell2)}^\star,u_\ell\rangle < \cdots <
\langle \theta_{(\ell q^\star)}^\star,u_\ell\rangle$.  The following
key result provides the inductive step in our proof.

\begin{prop} 
\label{prop:finduction}
Fix $\ell\in\{1,\ldots,d\}$, and define 
$$
	\tilde\Phi_0^\ell(t) :=
	a_0\,\Phi_0^\ell(t)+\sum_{i=1}^{q^\star}
	a_i\,e^{t\langle\theta_{i}^\star,u_\ell\rangle}.
$$
Suppose that for some $j\in\{1,\ldots,q^\star\}$ we have
$\Phi^{\ell,j}(t)=0$ for all $t\in\mathbb{R}$, where
$$
	\Phi^{\ell,j}(t):=
	\tilde\Phi_0^\ell(t)+
        \sum_{i=1}^{j}
	e^{t\langle\theta_{(\ell i)}^\star,u_\ell\rangle}
	\big\{
        t \langle b_{(\ell i)},u_\ell\rangle 
	+ t^2 \langle u_\ell,c_{(\ell i)} u_\ell\rangle 
	+ d_{(\ell i)}\, t^2\, \Phi_{(\ell i)}^\ell(t)
        \big\}.
$$
Then
$d_{(\ell j)}\langle u_\ell,\lambda_{(\ell j)}(\mathbb{R}^d)u_\ell\rangle=0$, 
$\langle u_\ell,c_{(\ell j)} u_\ell\rangle=0$, and 
$\langle b_{(\ell j)},u_\ell\rangle=0$.
\end{prop}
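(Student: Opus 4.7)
The approach is to read the identity $\Phi^{\ell,j}(t)=0$ as the vanishing of a compactly supported signed distribution $F$ on $\mathbb{R}$ whose two-sided Laplace transform equals $\Phi^{\ell,j}$, and then to exploit the disjointness of the closed intervals $\cl\langle A_{(\ell i)},u_\ell\rangle$ (Lemma \ref{lem:bubble}) together with the nonnegativity of certain pieces of $F$ to pin down the coefficients of the $(\ell j)$-th term.

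First I would identify each summand of $\Phi^{\ell,j}$ with a compactly supported distribution: $a_0\Phi_0^\ell\leftrightarrow a_0\tilde\nu_0$ with $\tilde\nu_0$ the image of $\nu_0$ under $\theta\mapsto\langle\theta,u_\ell\rangle$; $a_ie^{t\alpha_i}\leftrightarrow a_i\delta_{\alpha_i}$; $te^{t\alpha_{(\ell i)}}\leftrightarrow -\delta'_{\alpha_{(\ell i)}}$ and $t^2e^{t\alpha_{(\ell i)}}\leftrightarrow\delta''_{\alpha_{(\ell i)}}$; and $d_{(\ell i)}t^2\Phi_{(\ell i)}^\ell(t)e^{t\alpha_{(\ell i)}}\leftrightarrow d_{(\ell i)}G_i$, where $G_i$ is the order-$2$ distribution supported in $\cl\langle A_{(\ell i)},u_\ell\rangle$ acting on $\varphi\in C^2$ by $\int\frac{2(\varphi(\alpha_{(\ell i)}+y)-\varphi(\alpha_{(\ell i)})-y\varphi'(\alpha_{(\ell i)}))}{y^2}\,\mu_{(\ell i)}(dy)$, with $\mu_{(\ell i)}$ the pushforward of $\langle u_\ell,\lambda_{(\ell i)}(\cdot)u_\ell\rangle$ by $\theta\mapsto\langle\theta-\theta_{(\ell i)}^\star,u_\ell\rangle$ and the $y=0$ integrand read as $\varphi''(\alpha_{(\ell i)})$. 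Injectivity of the Laplace transform on such distributions forces $F=0$.

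Next I would localize $F$ to the open set $U=\langle A_{(\ell j)},u_\ell\rangle\setminus\{\alpha_{(\ell j)}\}$. By the disjointness and by $\alpha_i\in\langle A_i,u_\ell\rangle$, the only pieces of $F$ meeting $U$ are $a_0\tilde\nu_0|_U$ and the absolutely continuous measure $\frac{2d_{(\ell j)}}{(z-\alpha_{(\ell j)})^2}\tilde\mu_{(\ell j)}|_U$, with $\tilde\mu_{(\ell j)}$ the pushforward of $\langle u_\ell,\lambda_{(\ell j)}(\cdot)u_\ell\rangle$ by $\theta\mapsto\langle\theta,u_\ell\rangle$. Both are nonnegative measures summing to zero on $U$, hence each vanishes on $U$; the identical $\delta$-mass argument at each of the two boundary points of $\cl\langle A_{(\ell j)},u_\ell\rangle$ (where only $\tilde\nu_0$ and $\tilde\mu_{(\ell j)}$ can contribute, again both nonnegatively) rules out atoms of $d_{(\ell j)}\tilde\mu_{(\ell j)}$ there as well. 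Therefore $d_{(\ell j)}\tilde\mu_{(\ell j)}=d_{(\ell j)}m\,\delta_{\alpha_{(\ell j)}}$ with $m=\tilde\mu_{(\ell j)}(\{\alpha_{(\ell j)}\})$.

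Finally, at $z=\alpha_{(\ell j)}$ the singular part of $F$ is a combination $C_0\delta+C_1\delta'+C_2\delta''$. Using $t^ke^{t\alpha}\leftrightarrow(-1)^k\delta^{(k)}_\alpha$ and that the atom $m\delta_0$ of $\mu_{(\ell j)}$ contributes $d_{(\ell j)}m\,t^2e^{t\alpha_{(\ell j)}}\leftrightarrow d_{(\ell j)}m\,\delta''_{\alpha_{(\ell j)}}$, I would read off $C_1=-\langle b_{(\ell j)},u_\ell\rangle$ and $C_2=\langle u_\ell,c_{(\ell j)}u_\ell\rangle+d_{(\ell j)}m$. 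Setting $C_1=C_2=0$ gives $\langle b_{(\ell j)},u_\ell\rangle=0$, and since both summands of $C_2$ are individually nonnegative, both $\langle u_\ell,c_{(\ell j)}u_\ell\rangle=0$ and $d_{(\ell j)}m=0$; combined with the localization step this forces $d_{(\ell j)}\tilde\mu_{(\ell j)}=0$, i.e., $d_{(\ell j)}\langle u_\ell,\lambda_{(\ell j)}(\mathbb{R}^d)u_\ell\rangle=0$, as required. The hard part will be the rigorous distributional set-up---in particular verifying that the restriction of $d_{(\ell j)}G_j$ to $U$ is genuinely the nonnegative measure displayed above, and cleanly separating the singular ($\delta,\delta',\delta''$) contributions at $\alpha_{(\ell j)}$ from the diffuse contributions on $U$, given that the formal decomposition of $G_j$ into Dirac-type and $(z-\alpha_{(\ell j)})^{-2}$-type pieces is only meaningful when $\tilde\mu_{(\ell j)}$ does not concentrate near $\alpha_{(\ell j)}$.
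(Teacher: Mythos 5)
Your proposal is correct, and it takes a genuinely different route from the paper. The paper proves the Proposition by a sequence of asymptotic arguments as $t\to\infty$ on the Laplace side: Step~1 kills the terms to the right of $\theta_{(\ell j)}^\ell$; Step~2 compares $\Phi^{\ell,j}(t)$ with the (manifestly positive) contribution from $[\theta_{(\ell j)}^\ell,\infty)$ to extract $\langle u_\ell,c_{(\ell j)}u_\ell\rangle=0$ and the right half of $\lambda_{(\ell j)}^\ell$; Step~3 is the delicate one, differentiating $\Phi^{\ell,j}(t)e^{-t\theta_{(\ell j)}^\ell}$ twice to promote the left half of $\lambda_{(\ell j)}^\ell$ and $\nu_0^\ell$ to a dominant positive term; Step~4 then reads off $\langle b_{(\ell j)},u_\ell\rangle$. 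Your approach instead pushes the identity $\Phi^{\ell,j}\equiv0$ back to the physical side: $\Phi^{\ell,j}$ is the two-sided Laplace transform of a compactly supported distribution $F$ of order $\le 2$, so $F=0$ by analytic continuation and Paley--Wiener--Schwartz, and then one localizes $F$ on (and around) $\cl\langle A_{(\ell j)},u_\ell\rangle$, using the disjointness from Lemma~\ref{lem:bubble}. Two remarks on the details. First, the worry you flag about ``$\tilde\mu_{(\ell j)}$ concentrating near $\alpha_{(\ell j)}$'' is actually moot: the restriction $G_j|_U$ is computed only against test functions vanishing near $\alpha_{(\ell j)}$, where the Taylor-subtracted kernel reduces to $2(z-\alpha_{(\ell j)})^{-2}$ with no convergence issue; once $F|_U=0$ forces $d_{(\ell j)}\tilde\mu_{(\ell j)}|_U=0$ (and likewise at the two endpoints, where again only nonnegative measure parts of $F$ appear), the singular part of $d_{(\ell j)}G_j$ at $\alpha_{(\ell j)}$ is exactly $d_{(\ell j)}m\,\delta''$ and the final bookkeeping is clean. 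Second, you should also note that $a_0\tilde\nu_0$ can charge the endpoints of $\cl\langle A_{(\ell j)},u_\ell\rangle$ and the point $\alpha_{(\ell j)}$ itself; this does not affect the argument because the $\delta$-coefficient at $\alpha_{(\ell j)}$ is not needed for the stated conclusions and the contributions are nonnegative wherever they occur. What each approach buys: yours replaces the paper's four separate limit comparisons (in particular the second-derivative device of Step~3) by a single localization, and the role of nonnegativity (of $a_0$, $d_{(\ell j)}$, $\lambda_{(\ell j)}$, $c_{(\ell j)}$, $\nu_0$) is structurally transparent --- nonnegative measures summing to zero must each vanish. The cost is the distributional set-up and the invocation of Paley--Wiener--Schwartz, whereas the paper's argument uses nothing beyond elementary estimates and dominated convergence.
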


\begin{proof} 
Let us write for simplicity $\theta_i^\ell = 
\langle\theta_i^\star,u_\ell\rangle$, and denote by $\lambda_i^\ell$ 
and $\nu_0^\ell$ the finite measures on $\mathbb{R}$ defined such that 
$\int f(x)\lambda_i^\ell(dx) = \int f(\langle\theta,u_\ell\rangle) \langle 
u_\ell,\lambda_i(d\theta)u_\ell\rangle$ and $\int f(x)\nu_0^\ell(dx) = 
\int f(\langle\theta,u_\ell\rangle) \nu_0(d\theta)$, respectively. For 
notational convenience, we will assume in the following that $(\ell 
i)=i$ and $\nu_0^\ell(\{\theta_i^\ell\})=0$ for all 
$i=1,\ldots,q^\star$.  This entails no loss of generality: the former 
can always be attained by relabeling of the points $\theta_i^\star$, 
while $\tilde\Phi_0^\ell$ is unchanged if we replace $\nu_0^\ell$ and 
$a_i$ by 
$\nu_0^\ell(\,\cdot\,\cap\mathbb{R}\backslash\{\theta_1^\ell,\ldots, 
\theta_{q^\star}^\ell\})$ and $a_i + 
a_0\,\nu_0^\ell(\{\theta_i^\ell\})$, respectively.  Note that
$$
	\langle A_i,u_\ell\rangle = 
	\mbox{}]\theta_i^{\ell-}, \theta_i^{\ell+}[\mbox{},
	\quad\mbox{where}\quad 
	\theta_i^{\ell-}<\theta_i^\ell< 
	\theta_i^{\ell+}<\theta_{i+1}^{\ell-}\quad\mbox{for all }i
$$
by our assumptions ($\langle A_i,u_\ell\rangle$ must be an interval
as $A_i$ is convex).

\textbf{Step 1}.  We claim that the following hold:
$$
	a_i = 0\mbox{ for all }i\ge j+1\quad\mbox{and}\quad
	a_0\,\nu_0^\ell([\theta_{j+1}^\ell,\infty[\mbox{})=0.
$$
Indeed, suppose this is not the case.  Then it is easily seen that
$$
	\liminf_{t\to\infty} \frac{|\tilde\Phi_0^\ell(t)|}{
	e^{t\theta_{j+1}^\ell}}>0,
$$
where we have used that $\nu_0^\ell$ has no mass at
$\{\theta_1^\ell,\ldots,\theta_{q^\star}^\ell\}$.  On the other hand,
as $\phi$ is positive and increasing and as $\lambda_i$ is supported
on $\mathop{\mathrm{cl}}A_i$, we can estimate
$$
	0\le
	\frac{t^2\,e^{t\theta_i^\ell}\,\Phi_i^\ell(t)}{e^{t\theta_{j+1}^\ell}}
	\le t^2\,e^{-t(\theta_{j+1}^\ell-\theta_i^\ell)}\,
	\phi(t\{\theta_j^{\ell+}-\theta_i^\ell\})\,
	\lambda_i^\ell(\mathbb{R})
	\xrightarrow{t\to\infty}0
$$
for $i=1,\ldots,j$.  But then we must have
$$
	0 = 
	\liminf_{t\to\infty} \frac{|\Phi^{\ell,j}(t)|}{
	e^{t\theta_{j+1}^\ell}}>0,
$$
which yields the desired contradiction.

\textbf{Step 2}.  We claim that the following hold:
$$
	d_j\lambda_j^\ell([\theta_j^\ell,\infty[\mbox{})=0,
	\quad \langle u_\ell,c_j u_\ell\rangle = 0,\quad\mbox{and}\quad
	a_0\,\nu_0^\ell([\theta_j^\ell,\infty[\mbox{})=0.
$$
Indeed, suppose this is not the case.  As $\nu_0^\ell(\{\theta_j^\ell\})=0$,
we can choose $\varepsilon>0$ such that
$\nu_0^\ell([\theta_j^\ell+\varepsilon,\infty[\mbox{})\ge
\nu_0^\ell([\theta_j^\ell,\infty[\mbox{})/2$.  As $a_0,d_j\ge 0$, and using
that $\phi$ is positive and increasing with $\phi(0)=1$ and that
$e^{\varepsilon t}\ge (\varepsilon t)^2/2$ for $t\ge 0$, 
we can estimate
\begin{multline*}
	a_0\,\Phi_0^\ell(t)+ e^{t\theta_j^\ell}\big\{
	t^2\langle u_\ell,c_j u_\ell\rangle+
	d_j\,t^2\,\Phi_j^\ell(t)\big\} \ge \mbox{}\\
	t^2\,e^{t\theta_j^\ell}\,\bigg\{
	\frac{\varepsilon^2}{4}\,
	a_0\,\nu_0^\ell([\theta_j^\ell,\infty[\mbox{})
	+ \langle u_\ell,c_j u_\ell\rangle +
	d_j\,\lambda_j^\ell([\theta_j^\ell,\infty[\mbox{})\bigg\} > 0
\end{multline*}
for all $t\ge 0$.  On the other hand, it is easily seen that
$$
	\frac{1}{t^2\,e^{t\theta_j^\ell}}
	\left[
        \sum_{i=1}^{j}
	e^{t\theta_i^\ell}
	\big\{
	a_i
        + t \langle b_i,u_\ell\rangle \big\}
	+
        \sum_{i=1}^{j-1}
	e^{t\theta_i^\ell}
	\big\{
	t^2 \langle u_\ell,c_{i} u_\ell\rangle
	+ d_i\, t^2\, \Phi_i^\ell(t)
        \big\}\right] \xrightarrow{t\to\infty} 0.
$$
But this would imply that
$$
	0 = \lim_{t\to\infty}
	\frac{\Phi^{\ell,j}(t)}{a_0\,\Phi_0^\ell(t)+ e^{t\theta_j^\ell}\{
        t^2\langle u_\ell,c_j u_\ell\rangle+
        d_j\,t^2\,\Phi_j^\ell(t)\}} = 1,
$$
which yields the desired contradiction.

\textbf{Step 3}.  We claim that the following hold:
$$
	d_j\,\lambda_j^\ell([\theta_j^{\ell-},\theta_j^\ell[\mbox{})=0
	\quad\mbox{and}\quad
	a_0\,\nu_0^\ell([\theta_j^{\ell-},\theta_j^\ell[\mbox{})=0.
$$
Indeed, suppose this is not the case.  We can compute
\begin{multline*}
        0 = \frac{d^2}{dt^2}\left(
        \frac{\Phi^{\ell,j}(t)}{e^{t\theta_j^\ell}}\right)
        = d_j\int e^{t(\theta-\theta_j^\ell)}\,
        \lambda_j^\ell(d\theta) 
	+ a_0 \int e^{t(\theta-\theta_j^\ell)}\,
	(\theta-\theta_j^\ell)^2\,\nu_0^\ell(d\theta) \\
	\mbox{} +
        \sum_{i=1}^{j-1}
        \frac{d^2}{dt^2}\,
	e^{-t(\theta_j^\ell-\theta_i^\ell)}\big\{
        a_i + t\langle b_i,u_\ell\rangle 
	+ t^2\langle u_\ell,c_i u_\ell\rangle 
	+ d_i\,t^2\,\Phi_i^\ell(t)\big\},
\end{multline*}
where the derivative and integral may be exchanged by \cite{Wil91}, 
Appendix A16.  We now note that as $a_0,d_j\ge 0$, we can estimate
for $t\ge 0$
\begin{multline*}
        d_j\int e^{t(\theta-\theta_j^\ell)}\,
        \lambda_j^\ell(d\theta) 
	+ a_0 \int e^{t(\theta-\theta_j^\ell)}\,
	(\theta-\theta_j^\ell)^2\,\nu_0^\ell(d\theta) \ge \mbox{}\\
	e^{t(\theta_j^{\ell-}-\theta_j^\ell)}
	\Bigg\{
	d_j\,\lambda_j^\ell([\theta_j^{\ell-},\theta_j^\ell[\mbox{})
	+
	a_0\int_{[\theta_j^{\ell-},\theta_j^\ell[}
	(\theta-\theta_j^\ell)^2\,\nu_0^\ell(d\theta)
	\Bigg\}>0.
\end{multline*}
On the other hand, as $(e^x-1)/x$ is positive and increasing, we obtain
for $t\ge 0$
\begin{align*}
&       e^{-t(\theta_j^{\ell-}-\theta_j^\ell)}
        \left|\frac{d^2}{dt^2}\,
        e^{-t(\theta_j^\ell-\theta_i^\ell)}\,t^2\,\Phi_i^\ell(t)\right| 
\\ &\quad \mbox{} 
        = e^{-t(\theta_j^{\ell-}-\theta_j^\ell)}
        \times e^{-t(\theta_j^\ell-\theta_i^\ell)} \times
        \Bigg|
        (\theta_j^\ell-\theta_i^\ell)^2
        \int t^2\phi(t\{\theta-\theta_i^\ell\})\,\lambda_i^\ell(d\theta)
\\ &\qquad\qquad \mbox{} 
        -2(\theta_j^\ell-\theta_i^\ell)
        \int 
        \frac{e^{t(\theta-\theta_i^\ell)}-1}{\theta-\theta_i^\ell}
        \,\lambda_i^\ell(d\theta)
        + \int e^{t(\theta-\theta_i^\ell)}\,\lambda_i^\ell(d\theta)
        \Bigg| 
\\ &\quad \mbox{} 
        \le e^{-t(\theta_j^{\ell-}-\theta_i^\ell)}
        \Bigg\{
        (\theta_j^\ell-\theta_i^\ell)^2\,t^2\,
        \phi(t\{\theta_i^{\ell+}-\theta_i^\ell\}) 
\\ &\qquad\qquad\qquad\qquad\qquad \mbox{}
        + 2\,(\theta_j^\ell-\theta_i^\ell)\,
        \frac{e^{t(\theta_i^{\ell+}-\theta_i^\ell)}-1}{
                \theta_i^{\ell+}-\theta_i^\ell}
        + e^{t(\theta_i^{\ell+}-\theta_i^\ell)}
        \Bigg\}
	\,\lambda_i^\ell(\mathbb{R}),
\end{align*}
which converges to zero as $t\to\infty$ for every $i<j$.  It follows that
$$
        0 = \lim_{t\to\infty}\frac{\frac{d^2}{dt^2}\left(
        \Phi^{\ell,j}(t)/e^{t\theta_j^\ell}\right)}{
        d_j\int e^{t(\theta-\theta_j^\ell)}\,
        \lambda_j^\ell(d\theta) 
	+ a_0 \int e^{t(\theta-\theta_j^\ell)}\,
	(\theta-\theta_j^\ell)^2\,\nu_0^\ell(d\theta)}
	=1,
$$
which yields the desired contradiction.

\textbf{Step 4}.  Recall that $\lambda_j^\ell$ is supported on
$[\theta_j^{\ell-},\theta_j^{\ell+}]$ by construction.  We have
therefore established in the previous steps that the following hold:
$$
	d_j\langle u_\ell,\lambda_j(\mathbb{R}^d)u_\ell\rangle=
	\langle u_\ell,c_j u_\ell\rangle=
	a_0\,\nu_0^\ell([\theta_j^{\ell-},\infty[\mbox{})=0,\qquad
	a_i=0\mbox{ for }i>j.
$$
It is therefore easily seen that
$$
	0 = \lim_{t\to\infty}\frac{\Phi^{\ell,j}(t)}{t\,e^{t\theta_j^\ell}}
	= \langle b_j,u_\ell\rangle.
$$
Thus the proof is complete.
\end{proof}

We can now perform the induction by starting from (\ref{eq:laplace}) and
applying Proposition \ref{prop:finduction} repeatedly.  This yields
$d_j\langle u_\ell,\lambda_j(\mathbb{R}^d)u_\ell\rangle =
\langle u_\ell,c_j u_\ell\rangle = \langle b_j,u_\ell\rangle = 0$
for all $j=1,\ldots,q^\star$ and $\ell=1,\ldots,d$.  As $u_1,\ldots,u_d$
are linearly independent and $c_j\in M_+^d$, this implies 
that $b_j=0$, $c_j=0$ and $d_j=0$ for all $j=1,\ldots,q^\star$, so that
$$
	a_0\int e^{\ii\langle\theta,s\rangle}\,\nu_0(d\theta) +
	\sum_{i=1}^{q^\star}a_i\,e^{\ii\langle\theta_i^\star,s\rangle}
	= 0
$$
for all $s\in\mathbb{R}^d$ (this follows as above by Lemma \ref{lem:flem},
$h\equiv 0$, $F[f_0](s)\ne 0$ for $s$ in a neighborhood of the origin, and
using analyticity).  But
by the uniqueness of Fourier transforms, this implies that the signed 
measure $a_0\,\nu_0+\sum_{i=1}^{q^\star} a_i\,\delta_{\{\theta_i^\star\}}$ 
has no mass.  As $\nu_0$ is supported on $A_0$, this implies that 
$a_j=0$ for all $j=1,\ldots,q^\star$.
We have therefore shown that $a_i,b_i,c_i,d_i=0$ for all 
$i=1,\ldots,q^\star$.  But recall that 
$|a_0|+\sum_{i=1}^{q^\star}\{|a_i|+\|b_i\|+\mathrm{Tr}[c_i]+|d_i|\}=1$,
so that evidently $a_0=1$.

To complete the proof, it remains to note that
$$
	\int \frac{\ell(\eta^{n},\beta^{n},\rho^{n},\tau^{n},\nu^{n})}{
	N(\eta^{n},\beta^{n},\rho^{n},\tau^{n},\nu^{n})}\,f^\star d\mu
	= \sum_{i=0}^{q^\star} a_i^n
	\xrightarrow{n\to\infty} 1.
$$
But this is impossible, as 
$$
	\left\|
	\frac{\ell(\eta^{n},\beta^{n},\rho^{n},\tau^{n},\nu^{n})}{
	N(\eta^{n},\beta^{n},\rho^{n},\tau^{n},\nu^{n})}\right\|_1
	\xrightarrow{n\to\infty}0
$$
by construction.  Thus we have the desired contradiction.
\end{proof}

\subsection{Proof of Theorem \ref{thm:mainmixtures}}
\label{sec:proofmainmixtures}

The proof of Theorem \ref{thm:mainmixtures} consists of a sequence of
approximations, which we develop in the form of lemmas.  \emph{Throughout 
this section, we always presume that Assumption A holds.}

We begin by establishing the existence of an envelope function.

\begin{lemma}
\label{lem:senvelope}
Define $S = (H_0+H_1+H_2)\,d/c^\star$.  
Then $S\in L^4(f^\star d\mu)$, and 
$$
        \frac{|f/f^\star-1|}{\|f/f^\star-1\|_1} \le S
        \quad\mbox{for all }f\in\mathcal{M}.
$$
\end{lemma}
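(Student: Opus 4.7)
The plan is to use Theorem \ref{thm:localgeom} to lower bound $\|f/f^\star - 1\|_1$ and Taylor's theorem to upper bound $|f/f^\star - 1|$ pointwise, then combine the two estimates.

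Given $f = \sum_{i=1}^q \pi_i f_{\theta_i} \in \mathcal{M}$, I would partition the mixture atoms via the neighborhoods $A_0,\ldots,A_{q^\star}$ of Lemma \ref{lem:bubble}. Set $\pi_j^A := \sum_{\theta_i \in A_j}\pi_i$, and when $\pi_j^A > 0$ let $\nu_j := (\pi_j^A)^{-1}\sum_{\theta_i \in A_j}\pi_i\,\delta_{\theta_i}$ (otherwise pick any $\nu_j \in \mathfrak{P}(A_j)$). With the choice $\eta_j = -\pi_j^\star$, $\beta_j = 0$, $\rho_j = 0$, $\tau_j = \pi_j^A$ for $j \ge 1$ and $\tau_0 = \pi_0^A$, a direct computation gives $\ell(\eta,\beta,\rho,\tau,\nu) = f/f^\star - 1$. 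Theorem \ref{thm:localgeom} then yields $\|f/f^\star - 1\|_1 \ge c^\star N$, where
\begin{equation*}
N = \pi_0^A + \sum_{j=1}^{q^\star}|\pi_j^A - \pi_j^\star| + \sum_{j=1}^{q^\star}\pi_j^A\Big\|\int(\theta - \theta_j^\star)\,\nu_j(d\theta)\Big\| + \sum_{j=1}^{q^\star}\frac{\pi_j^A}{2}\int\|\theta - \theta_j^\star\|^2\,\nu_j(d\theta).
\end{equation*}

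For the matching pointwise upper bound, I would write
\begin{equation*}
f - f^\star = \sum_{\theta_i \in A_0}\pi_i f_{\theta_i} + \sum_{j=1}^{q^\star}\Big[(\pi_j^A - \pi_j^\star)f_{\theta_j^\star} + \sum_{\theta_i \in A_j}\pi_i(f_{\theta_i} - f_{\theta_j^\star})\Big]
\end{equation*}
and Taylor-expand $f_{\theta_i} - f_{\theta_j^\star}$ about $\theta_j^\star$ to second order with integral remainder. Summing over $\theta_i \in A_j$, the linear terms aggregate into $\nabla f_{\theta_j^\star}(x)^*\cdot \pi_j^A\int(\theta - \theta_j^\star)\,\nu_j(d\theta)$, which by Cauchy--Schwarz and $\|\nabla f_{\theta_j^\star}\|/f^\star \le \sqrt{d}\,H_1$ is bounded by $\sqrt{d}\,H_1(x)\,\pi_j^A\|\int(\theta - \theta_j^\star)\,\nu_j\|$. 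The quadratic remainder, using convexity of $A_j$ (so intermediate points lie in $A_j \subseteq \Theta$) together with $|v^*D_2f_\theta\,v|/f^\star \le d\,H_2\|v\|^2$, is dominated by $d\,H_2(x)\cdot \pi_j^A\int\|\theta - \theta_j^\star\|^2\nu_j/2$. The $A_0$ sum and the $(\pi_j^A - \pi_j^\star)f_{\theta_j^\star}$ pieces are each bounded by $H_0$ times their coefficients. Summand-by-summand this gives $|f(x)/f^\star(x) - 1| \le d(H_0 + H_1 + H_2)(x)\,N$.

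Combining with $N \le \|f/f^\star - 1\|_1/c^\star$ yields the claim with $S = d(H_0+H_1+H_2)/c^\star$, and $S \in L^4(f^\star d\mu)$ is immediate from Assumption A (in fact $H_k \in L^4$ is more than needed here). The main obstacle is the algebraic bookkeeping that lines up the pointwise Taylor bound term-by-term with the pseudodistance $N$ of the local geometry theorem; once the envelope $d(H_0+H_1+H_2)$ is identified, both the integral-remainder form of Taylor's theorem and the $H_k$ bounds handle the estimate uniformly in $q$ and over all of $\mathcal{M}$.
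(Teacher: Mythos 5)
Your proposal is correct and is essentially the same argument as the paper's: the paper decomposes $(f-f^\star)/f^\star$ by the neighborhoods $A_0,\ldots,A_{q^\star}$, applies Taylor's theorem with integral remainder to get the pointwise bound $|f/f^\star-1|\le d(H_0+H_1+H_2)\,N$, and applies Theorem~\ref{thm:localgeom} to get $\|f/f^\star-1\|_1\ge c^\star N$. The only stylistic difference is that you make explicit the choice of $(\eta,\beta,\rho,\tau,\nu)\in\mathfrak{D}$ putting $f/f^\star-1$ in the form $\ell$, which the paper leaves implicit.
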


\begin{proof}
That $S\in L^4(f^\star d\mu)$ follows directly from Assumption A.
To proceed, let $f\in\mathcal{M}_q$, so that we can write $f=\sum_{i=1}^q 
\pi_i f_{\theta_i}$.  Then
$$
        \frac{f - f^\star}{f^\star} = 
	\sum_{j:\theta_j\in A_0}\pi_j \frac{f_{\theta_j}}{f^\star}+
        \sum_{i=1}^{q^\star}\Bigg\{
        \Bigg(
        \sum_{j:\theta_j\in A_i}\pi_j
        -\pi_i^\star\Bigg)\frac{f_{\theta_i^\star}}{f^\star}
        + \sum_{j:\theta_j\in A_i}
        \pi_j\,
        \frac{f_{\theta_j}-f_{\theta_i^\star}}{f^\star}
        \Bigg\}.
$$
Taylor expansion gives 
\begin{multline*}
	f_{\theta_j}(x)-f_{\theta_i^\star}(x)=(\theta_j-\theta_i^\star)^*
	D_1f_{\theta_i^\star}(x)+ \\
	\frac{1}{2}\int_{0}^{1}(\theta_j-\theta_i^\star)^*D_2f_{\theta_i^\star+u(\theta_j-\theta_i^\star)}(x)\,(\theta_j-\theta_i^\star)\, 2(1-u)\,du.
\end{multline*}
Using Assumption A, we find that
\begin{multline*}
        \left|\frac{f - f^\star}{f^\star}\right|\le 
        \Bigg[ \sum_{j:\theta_j\in A_0}\pi_j +\sum_{i=1}^{q^\star}\Bigg\{
        \Bigg|
        \sum_{j:\theta_j\in A_i}\pi_j
        -\pi_i^\star\Bigg| +
        \Bigg\|
        \sum_{j:\theta_j\in A_i}\pi_j(\theta_j-\theta_i^\star)
        \Bigg\|  
        \\ \mbox{} 
        + \frac{1}{2}
        \sum_{j:\theta_j\in A_i}\pi_j\|\theta_j-\theta_i^\star\|^2
        \Bigg\}\Bigg]
        \,(H_0+H_1+H_2)\,d.
\end{multline*}
On the other hand, Theorem \ref{thm:localgeom} gives
\begin{multline*}
        \left\|\frac{f - f^\star}{f^\star}\right\|_1\ge 
        c^\star
        \Bigg[ \sum_{j:\theta_j\in A_0}\pi_j +\sum_{i=1}^{q^\star}
        \Bigg\{
        \Bigg|
        \sum_{j:\theta_j\in A_i}\pi_j
        -\pi_i^\star\Bigg| 
        \\ \mbox{}
        +
        \Bigg\|
        \sum_{j:\theta_j\in A_i}\pi_j(\theta_j-\theta_i^\star)
        \Bigg\|  
        + \frac{1}{2}
        \sum_{j:\theta_j\in A_i}\pi_j\|\theta_j-\theta_i^\star\|^2
        \Bigg\}\Bigg].
\end{multline*} 
The proof follows directly.
\end{proof}

\begin{cor}
\label{cor:denvelope}
$|d|\le D$ for all $d\in\mathcal{D}$, where $D=2S\in L^4(f^\star d\mu)$.
\end{cor}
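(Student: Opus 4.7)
The plan is to pass from the $L^1$-normalized bound on $|f/f^\star-1|$ supplied by Lemma \ref{lem:senvelope} to the $L^2$-normalized bound on $|\sqrt{f/f^\star}-1|$ that appears in the definition of $d_f$. The bridge is the elementary factorization $u-1=(\sqrt u-1)(\sqrt u+1)$ applied with $u=f/f^\star$.

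Concretely, the numerator is controlled pointwise by
\[
\bigl|\sqrt{f/f^\star}-1\bigr|
= \frac{|f/f^\star-1|}{\sqrt{f/f^\star}+1}
\le |f/f^\star-1|,
\]
since $\sqrt{f/f^\star}+1\ge 1$. For the denominator, I would use the same factorization together with Cauchy--Schwarz under $f^\star d\mu$:
\[
\|f/f^\star-1\|_1
= \int |\sqrt{f/f^\star}-1|\,(\sqrt{f/f^\star}+1)\,f^\star d\mu
\le \|\sqrt{f/f^\star}-1\|_2\,\|\sqrt{f/f^\star}+1\|_2.
\]
The factor $\|\sqrt{f/f^\star}+1\|_2$ is bounded by $2$, because
\[
\|\sqrt{f/f^\star}+1\|_2^2
= \int f\,d\mu + 2\int\sqrt{f f^\star}\,d\mu + \int f^\star\,d\mu
\le 1+2+1=4,
\]
the middle term being at most $1$ by Cauchy--Schwarz on probability densities. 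Hence $\|f/f^\star-1\|_1\le 2\,\|\sqrt{f/f^\star}-1\|_2$.

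Combining these two inequalities gives, for any $f\in\mathcal{M}$ with $f\ne f^\star$,
\[
|d_f|=\frac{|\sqrt{f/f^\star}-1|}{\|\sqrt{f/f^\star}-1\|_2}
\le \frac{|f/f^\star-1|}{\tfrac12\|f/f^\star-1\|_1}
= 2\cdot\frac{|f/f^\star-1|}{\|f/f^\star-1\|_1}
\le 2S,
\]
where the last step invokes Lemma \ref{lem:senvelope}. Setting $D=2S$ yields the pointwise envelope, and $D\in L^4(f^\star d\mu)$ is immediate from $S\in L^4(f^\star d\mu)$. No step is really an obstacle here; the only thing to verify carefully is the Cauchy--Schwarz argument showing $\|\sqrt{f/f^\star}+1\|_2\le 2$, which is where the normalization of $f$ and $f^\star$ as probability densities enters.
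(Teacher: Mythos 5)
Your argument is correct and is essentially the paper's proof: the paper cites the two inequalities $|\sqrt{x}-1|\le|x-1|$ and $\|f-f^\star\|_{\rm TV}\le 2h(f,f^\star)$ directly, whereas you rederive both from the factorization $u-1=(\sqrt{u}-1)(\sqrt{u}+1)$ and Cauchy--Schwarz, but the chain $|d_f|\le |f/f^\star-1|/\tfrac12\|f/f^\star-1\|_1\le 2S$ and the appeal to Lemma \ref{lem:senvelope} are identical.
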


\begin{proof}
Using $\|f-f^\star\|_{\rm TV}\le 2h(f,f^\star)$ and
$|\sqrt{x}-1|\le |x-1|$, we find
$$
        |d_f| = \frac{|\sqrt{f/f^\star}-1|}{h(f,f^\star)} \le
        \frac{|f/f^\star-1|}{\frac{1}{2}\|f/f^\star-1\|_1} \le
        2S,
$$
where we have used Lemma \ref{lem:senvelope}.
\end{proof}

Next, we prove that the Hellinger normalized densities
$d_f$ can be approximated by chi-square normalized densities
for small $h(f,f^\star)$.

\begin{lemma}
\label{lem:chi2}
For any $f\in\mathcal{M}$, we have
$$
        \left|
        \frac{\sqrt{f/f^\star}-1}{h(f,f^\star)} -
        \frac{f/f^\star-1}{\sqrt{\chi^2(f||f^\star)}}
        \right| \le 
        \{4\|S\|_4^2 S+2S^2\}\,h(f,f^\star),
$$
where we have defined the chi-square divergence
$\chi^2(f||f^\star)=\|f/f^\star-1\|_2^2$.
\end{lemma}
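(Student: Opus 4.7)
I would set $u = f/f^\star$ so that $h(f,f^\star) = \|\sqrt{u}-1\|_2$ and $\sqrt{\chi^2(f||f^\star)} = \|u-1\|_2$, placing everything in $L^p(f^\star d\mu)$. The key algebraic identity is $u-1 = 2(\sqrt{u}-1) + (\sqrt{u}-1)^2$, which suggests adding and subtracting $(u-1)/(2h)$ to obtain the split
\[
\frac{\sqrt{u}-1}{h} - \frac{u-1}{\sqrt{\chi^2(f||f^\star)}}
= -\frac{(\sqrt{u}-1)^2}{2h} \;+\; \frac{(u-1)\bigl(\sqrt{\chi^2(f||f^\star)} - 2h\bigr)}{2h\,\sqrt{\chi^2(f||f^\star)}}.
\]
I would then estimate the two terms separately.

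The first term is handled immediately by the envelope of Corollary \ref{cor:denvelope}, $|\sqrt{u}-1| \le 2Sh$, yielding $(\sqrt{u}-1)^2/(2h) \le 2S^2 h$. For the prefactor $|u-1|/\sqrt{\chi^2(f||f^\star)}$ appearing in the second term, I would combine Lemma \ref{lem:senvelope} with the elementary inequality $\|u-1\|_1 \le \|u-1\|_2 = \sqrt{\chi^2(f||f^\star)}$, valid since $f^\star d\mu$ is a probability measure, to obtain the pointwise bound $|u-1|/\sqrt{\chi^2(f||f^\star)} \le S$.

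The main obstacle is estimating $|\sqrt{\chi^2(f||f^\star)} - 2h|$. Using the reverse triangle inequality together with the above identity reduces this to controlling $\|(\sqrt{u}-1)^2\|_2 = \|\sqrt{u}-1\|_4^2$, and the $L^4$ norm I would attack by a self-bounding argument: dominating one factor $(\sqrt{u}-1)^2 \le 4S^2 h^2$ pointwise by the envelope and applying Cauchy--Schwarz to the remaining factor gives
\[
\|\sqrt{u}-1\|_4^4 \;\le\; 4h^2 \int S^2 (\sqrt{u}-1)^2 f^\star d\mu \;\le\; 4h^2\,\|S\|_4^2\,\|\sqrt{u}-1\|_4^2,
\]
so that $\|\sqrt{u}-1\|_4^2 \le 4h^2 \|S\|_4^2$, whence $|\sqrt{\chi^2(f||f^\star)} - 2h| \le 4h^2\|S\|_4^2$. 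The second term is thus bounded by $S\cdot (4h^2\|S\|_4^2)/(2h) = 2\|S\|_4^2 S\,h$, and the two pieces combine to give $(2S^2 + 2\|S\|_4^2 S)\,h$, comfortably inside the stated bound $\{4\|S\|_4^2 S + 2S^2\}\,h(f,f^\star)$. The only nontrivial step is the self-bounding estimate for $\|\sqrt{u}-1\|_4^2$; everything else is algebraic rearrangement plus the two envelope inequalities already established.
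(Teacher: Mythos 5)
Your proof is correct, and it uses a decomposition that is the natural dual of the one in the paper. Setting $u = f/f^\star$, $v = u-1$, $R = -(\sqrt{u}-1)^2$, so that $\sqrt{u}-1 = \tfrac{1}{2}(v+R)$, $2h(f,f^\star) = \|v+R\|_2$ and $\sqrt{\chi^2(f||f^\star)} = \|v\|_2$, both proofs manipulate the same quantity
\[
\frac{(v+R)\,\|v\|_2 - v\,\|v+R\|_2}{\|v+R\|_2\,\|v\|_2}\,.
\]
The paper writes this as $\frac{(v+R)(\|v\|_2-\|v+R\|_2)}{\|v+R\|_2\|v\|_2} + \frac{R}{\|v\|_2}$ and controls the direction factor through Corollary~\ref{cor:denvelope} ($|v+R|/\|v+R\|_2 = |d_f|\le 2S$), with $|R|$ and $\|R\|_2$ bounded through Lemma~\ref{lem:senvelope}, Jensen, and $\|v\|_1\le 2h$. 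Your split --- inserting the intermediate $v/(2h)$ --- rearranges the same expression as $\frac{R}{\|v+R\|_2} + \frac{v(\|v\|_2-\|v+R\|_2)}{\|v+R\|_2\|v\|_2}$; you then bound the direction factor $|v|/\|v\|_2\le S$ via Lemma~\ref{lem:senvelope} and Jensen, and bound $\|R\|_2 = \|\sqrt{u}-1\|_4^2$ via Corollary~\ref{cor:denvelope}. This duality (the roles of $v$ and $v+R$, equivalently of $2h$ and $\sqrt{\chi^2}$, are interchanged) is exactly why you land on the slightly sharper constant $2\|S\|_4^2 S + 2S^2$ versus the paper's $4\|S\|_4^2 S + 2S^2$: the bound $|v|/\|v\|_2\le S$ saves a factor of $2$ over $|d_f|\le 2S$. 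One minor remark: the self-bounding argument for $\|\sqrt{u}-1\|_4^2$ is more elaborate than it needs to be, since the pointwise envelope $|\sqrt{u}-1|\le 2Sh$ of Corollary~\ref{cor:denvelope} already gives $\|\sqrt{u}-1\|_4\le 2h\|S\|_4$ directly, which is the same estimate.
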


\begin{proof}
Let us define the function $R$ as
$$
        \sqrt{\frac{f}{f^\star}}-1 = \frac{1}{2}\left\{
        \frac{f-f^\star}{f^\star}+R
        \right\}.
$$
Then we have
\begin{multline*}
        \frac{\sqrt{f/f^\star}-1}{h(f,f^\star)} -
        \frac{f/f^\star-1}{\sqrt{\chi^2(f||f^\star)}} =
        \frac{f/f^\star-1+R}{\|f/f^\star-1+R\|_2}-
        \frac{f/f^\star-1}{\|f/f^\star-1\|_2} = \mbox{}\\
        \frac{(f/f^\star-1+R)\{\|f/f^\star-1\|_2-\|f/f^\star-1+R\|_2\}
        +R\|f/f^\star-1+R\|_2
        }{\|f/f^\star-1+R\|_2\,\|f/f^\star-1\|_2},
\end{multline*}
so that by the reverse triangle inequality and Corollary 
\ref{cor:denvelope}
$$
        \left|
        \frac{\sqrt{f/f^\star}-1}{h(f,f^\star)} -
        \frac{f/f^\star-1}{\sqrt{\chi^2(f||f^\star)}}\right|
        \le
        \frac{2\|R\|_2S+|R|}{\|f/f^\star-1\|_2}.
$$
Now note that $R = -(\sqrt{f/f^\star}-1)^2\ge -(f/f^\star-1)^2$.
Therefore, by Lemma \ref{lem:senvelope},
$$
        |R| \le \left(\frac{f-f^\star}{f^\star}\right)^2
        \le S^2\left\|\frac{f-f^\star}{f^\star}\right\|_1^2
        \le S^2\left\|\frac{f-f^\star}{f^\star}\right\|_1
        \left\|\frac{f-f^\star}{f^\star}\right\|_2.
$$
The proof is easily completed using $\|f-f^\star\|_{\rm TV}\le
2h(f,f^\star)$.
\end{proof}

Finally, we need one further approximation step.

\begin{lemma}
\label{lem:deriv}
Let $q\in\mathbb{N}$ and $\alpha>0$.  Then for every $f\in\mathcal{M}_q$ 
such that $h(f,f^\star)\le\alpha$, it is possible to choose coefficients
$\eta_i\in\mathbb{R}$,
$\beta_{i}\in\mathbb{R}^{d}$, $\rho_i\in M_+^d$ for $i=1,\ldots,q^{\star}$,
and $\gamma_i\ge 0$, $\theta_i\in\Theta$ for $i=1,\ldots, q$,
such that $\sum_{i=1}^{q^\star}\mathrm{rank}[\rho_i]\le q\wedge dq^\star$,
$$
        \displaylines{
        \sum_{i=1}^{q^\star}|\eta_i|\le 
        \frac{1}{c^\star}+\frac{1}{\sqrt{c^\star\alpha}},
        \qquad
        \sum_{i=1}^{q^\star}\|\beta_i\|\le 
        \frac{1}{c^\star}+\frac{2T}{\sqrt{c^\star\alpha}},
        \cr
        \sum_{i=1}^{q^\star}\mathrm{Tr}[\rho_{i}]\le \frac{1}{c^\star},
        \qquad
        \sum_{j=1}^q|\gamma_j|\le \frac{1}{\sqrt{c^\star\alpha}\wedge c^\star},
        }
$$
and
$$
        \left|
        \frac{f/f^\star-1}{\sqrt{\chi^2(f||f^\star)}} -
        \ell
        \right|\le \frac{d^{3/2}\sqrt{2}}{3(c^\star)^{5/4}}\,
        \{\|H_3\|_2\, S + H_3\}\,\alpha^{1/4},
$$
where we have defined
$$
        \ell = \sum_{i=1}^{q^\star}\bigg\{
        \eta_i\frac{f_{\theta_i^\star}}{f^\star}
        +\beta_i^*\frac{D_1f_{\theta_i^\star}}{f^\star}
        +\mathrm{Tr}\bigg[\rho_i
	\frac{D_2f_{\theta_i^\star}}{f^\star}\bigg]
        \bigg\}
        + \sum_{j=1}^q\gamma_j\frac{f_{\theta_j}}{f^\star}.
$$
\end{lemma}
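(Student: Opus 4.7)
The plan is to approximate the $\chi^{2}$-normalized centered density $(f/f^\star-1)/\sqrt{\chi^2(f||f^\star)}$ by a second-order Taylor expansion of $f_\theta$ about each $\theta_i^\star$, applied only to mixture components $\theta_j$ lying within a small radius $\delta_0$ of some $\theta_i^\star$, while keeping ``far'' components (including all those in $A_0$) as stand-alone $\gamma_j f_{\theta_j}/f^\star$ terms. The truncation radius $\delta_0$ must balance the Taylor error (third order, linear in $\delta_0$) against the mass accumulated in the far regime (scaling as $1/\delta_0^{2}$ by a Chebyshev estimate). This forces $\delta_0\sim\alpha^{1/4}$ and is exactly what drives the $\alpha^{1/4}$ scaling of the stated error bound together with the $(c^\star)^{-5/4}$ prefactor.

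Write $f=\sum_{j=1}^q\pi_j f_{\theta_j}$ and fix $\delta_0>0$. Partition $\{1,\ldots,q\}$ as $J_0=\{j:\theta_j\in A_0\}$, $J_i^s=\{j:\theta_j\in A_i,\ \|\theta_j-\theta_i^\star\|\le\delta_0\}$, $J_i^l=\{j:\theta_j\in A_i,\ \|\theta_j-\theta_i^\star\|>\delta_0\}$, and decompose
\[
   \frac{f-f^\star}{f^\star}=\sum_{j\in J_0\cup\bigcup_i J_i^l}\pi_j\frac{f_{\theta_j}}{f^\star}+\sum_{i=1}^{q^\star}\biggl[\biggl(\sum_{J_i^s}\pi_j-\pi_i^\star\biggr)\frac{f_{\theta_i^\star}}{f^\star}+\sum_{J_i^s}\pi_j\,\frac{f_{\theta_j}-f_{\theta_i^\star}}{f^\star}\biggr].
\]
Apply Taylor's theorem with integral-form third-order remainder to each $f_{\theta_j}-f_{\theta_i^\star}$; the remainder $R_{ij}$ satisfies $|R_{ij}|\le\frac{d^{3/2}}{6}\|\theta_j-\theta_i^\star\|^3\,H_3\,f^\star$. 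Read off unnormalized coefficients $\eta_i^{0}=\sum_{J_i^s}\pi_j-\pi_i^\star$, $\beta_i^{0}=\sum_{J_i^s}\pi_j(\theta_j-\theta_i^\star)$, $\rho_i^{0}=\tfrac12\sum_{J_i^s}\pi_j(\theta_j-\theta_i^\star)(\theta_j-\theta_i^\star)^\ast$, and $\gamma_j^{0}=\pi_j$ for $j\in J_0\cup\bigcup_i J_i^l$. Because $\rho_i^{0}$ is a sum of nonnegatively weighted outer products, $\rho_i^{0}\in M_+^d$ and $\sum_i\mathrm{rank}\,\rho_i^{0}\le q\wedge dq^\star$. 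The final $\eta_i,\beta_i,\rho_i,\gamma_j$ are obtained by dividing by $\sqrt{\chi^2(f||f^\star)}$.

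For the coefficient bounds, Theorem \ref{thm:localgeom} applied as in the proof of Lemma \ref{lem:senvelope} shows that $\sum_{A_0}\pi_j$, $\sum_i|\sum_{A_i}\pi_j-\pi_i^\star|$, $\sum_i\|\sum_{A_i}\pi_j(\theta_j-\theta_i^\star)\|$ and $\tfrac12\sum_i\sum_{A_i}\pi_j\|\theta_j-\theta_i^\star\|^{2}$ are each bounded by $\|f/f^\star-1\|_1/c^\star\le\sqrt{\chi^2}/c^\star$ (the latter by Cauchy--Schwarz against the probability measure $f^\star\,d\mu$). This immediately gives $\sum_i\mathrm{Tr}[\rho_i]\le 1/c^\star$. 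For $\eta_i,\beta_i,\gamma_j$, I correct for restricting sums from $A_i$ to $J_i^s$ using the Chebyshev-type estimate
\[
   \sum_i\sum_{J_i^l}\pi_j\le\frac{1}{\delta_0^{2}}\sum_i\sum_{A_i}\pi_j\|\theta_j-\theta_i^\star\|^{2}\le\frac{2\sqrt{\chi^2}}{c^\star\,\delta_0^{2}},
\]
together with $\|\theta_j-\theta_i^\star\|\le 2T$ for the $\beta_i$ correction. Choosing $\delta_0=\sqrt{2}\,(c^\star)^{-1/4}\alpha^{1/4}$ makes $2/(c^\star\delta_0^{2})=1/\sqrt{c^\star\alpha}$, which matches the stated bounds.

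Finally, substituting $\|\theta_j-\theta_i^\star\|\le\delta_0$ on $J_i^s$ into the pointwise bound on $|R_{ij}|$ and using the pseudodistance bound on the second moment gives
\[
   \biggl|\frac{\mathrm{Rem}}{f^\star\sqrt{\chi^2}}\biggr|\le\frac{d^{3/2}H_3\,\delta_0}{6\sqrt{\chi^2}}\sum_i\sum_{A_i}\pi_j\|\theta_j-\theta_i^\star\|^{2}\le\frac{d^{3/2}H_3\,\delta_0}{3c^\star}=\frac{d^{3/2}\sqrt{2}\,H_3\,\alpha^{1/4}}{3(c^\star)^{5/4}},
\]
accounting for the $H_3$ term; the additional $\|H_3\|_2\,S$ contribution appears via the parallel $L^{2}$ bound $\|\mathrm{Rem}/f^\star\|_2\le\frac{d^{3/2}}{6}\|H_3\|_2\sum\pi_j\|\theta_j-\theta_i^\star\|^{3}$ combined with the envelope of Lemma \ref{lem:senvelope}, which is needed to control the error pointwise when $\sqrt{\chi^2}$ is anomalously small relative to $\|f/f^\star-1\|_1$. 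The main obstacle is precisely this coordination: the threshold $\delta_0$ must simultaneously make the far-regime corrections to $\eta_i,\beta_i,\gamma_j$ and the Taylor error scale as the same power of $\alpha$, while $\rho_i\in M_+^d$ and the rank constraint are maintained, and this is possible only because Theorem \ref{thm:localgeom} supplies a lower bound for $\|f/f^\star-1\|_1$ that controls both the individual pseudodistance terms \emph{and} the total second moment that governs the Chebyshev and Taylor estimates.
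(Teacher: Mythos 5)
Your proposal is essentially the paper's argument---same partition into near and far components (the paper's set $J$ is exactly your $\bigcup_i J_i^s$ with $\delta_0^2=2\sqrt{\alpha/c^\star}$), same second-order Taylor expansion, same outer-product construction of $\rho_i$, and the same use of Theorem~\ref{thm:localgeom} to lower-bound the normalizing quantity---but with one genuine variation: you normalize by $\sqrt{\chi^2(f\|f^\star)}$ whereas the paper sets $\ell=L/\|L\|_2$, where $L$ is the Taylor-plus-far-terms approximation. Your choice is actually \emph{simpler} and yields a \emph{tighter} error estimate: since $\ell-(f/f^\star-1)/\sqrt{\chi^2}=-\mathrm{Rem}/\sqrt{\chi^2}$, only the pointwise bound $H_3$ appears and the $\|H_3\|_2\,S$ term is unnecessary. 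Your final paragraph about that term is therefore confused: the phrase ``when $\sqrt{\chi^2}$ is anomalously small relative to $\|f/f^\star-1\|_1$'' cannot occur ($\|\cdot\|_1\le\|\cdot\|_2$ under the probability measure $f^\star d\mu$), and in the paper the $\|H_3\|_2\,S$ term comes from the mismatch $\big|\|L\|_2-\sqrt{\chi^2}\big|\le\|\mathrm{Rem}\|_2$ controlled via the envelope $S$---a contribution you simply do not incur.

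One genuine (if small) gap: your sketch of the $\gamma_j$ bound adds the two contributions---$\sum_{J_0}\pi_j/\sqrt{\chi^2}\le 1/c^\star$ from $A_0$ and the Chebyshev term $\le 1/\sqrt{c^\star\alpha}$ from $\bigcup_i J_i^l$---giving $1/c^\star+1/\sqrt{c^\star\alpha}$, which exceeds the stated $1/(\sqrt{c^\star\alpha}\wedge c^\star)$. To recover the stated bound with your normalization, you must combine the two masses \emph{before} inverting: Theorem~\ref{thm:localgeom} gives
$\sqrt{\chi^2}\ge c^\star\bigl[\sum_{J_0}\pi_j+\tfrac12\sum_i\sum_{A_i}\pi_j\|\theta_j-\theta_i^\star\|^2\bigr]\ge c^\star\bigl[\sum_{J_0}\pi_j+\sqrt{\alpha/c^\star}\sum_i\sum_{J_i^l}\pi_j\bigr]\ge(c^\star\wedge\sqrt{c^\star\alpha})\sum_{j\notin J}\pi_j$,
from which $\sum_j|\gamma_j|\le 1/(\sqrt{c^\star\alpha}\wedge c^\star)$ follows directly. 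The $\eta_i$, $\beta_i$, $\rho_i$ verifications you sketch are correct, as is the $\alpha^{1/4}$ calibration and the rank count.
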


\begin{proof}
As $f\in\mathcal{M}_q$, we can write $f=\sum_{j=1}^q\pi_jf_{\theta_j}$.
Note that by Theorem \ref{thm:localgeom}
$$
        h(f,f^\star) 
        \ge \frac{c^\star}{4} \sum_{i=1}^{q^\star}
        \sum_{j:\theta_j\in A_i}\pi_j\|\theta_j-\theta_i^\star\|^2.
$$
Therefore, $h(f,f^\star)\le\alpha$ implies
$\pi_j\|\theta_j-\theta_i^\star\|^2\le 4\alpha/c^\star$ for
$\theta_j\in A_i$.  In particular, whenever $\theta_j\in A_i$, either 
$\pi_j\le 
2\sqrt{\alpha/c^\star}$ or $\|\theta_j-\theta_i^\star\|^2\le
2\sqrt{\alpha/c^\star}$.  Define
$$
        J=\bigcup_{i=1,\ldots,q^\star}\left\{j:\theta_j\in A_i,~
        \|\theta_j-\theta_i^\star\|^2\le 2\sqrt{\alpha/c^\star}\right\}.
$$
Taylor expansion gives 
$$
	f_{\theta_j}(x)-f_{\theta_i^\star}(x)=(\theta_j-\theta_i^\star)^*
	D_1f_{\theta_i^\star}(x)
	+\frac{1}{2}(\theta_j-\theta_i^\star)^*D_2f_{\theta_i^\star}(x)
	\,(\theta_j-\theta_i^\star)
	+ R_{ji}(x),
$$
where $|R_{ji}| \le \frac{1}{6}d^{3/2}\|\theta_j-\theta_i^\star\|^3H_3$.
We can therefore write
$$
        \frac{f-f^\star}{f^\star} = L +
        \sum_{i=1}^{q^\star}\sum_{j\in J:\theta_j\in A_i}
        \pi_j R_{ji},
$$
where we have defined
\begin{multline*}
        L = \sum_{i=1}^{q^\star}\Bigg\{
        \Bigg(\sum_{j\in J:\theta_j\in A_i}\pi_j-\pi_i^\star\Bigg)
        \frac{f_{\theta_i^\star}}{f^\star} +
        \sum_{j\in J:\theta_j\in A_i}\pi_j(\theta_j-\theta_i^\star)^*\,
        \frac{D_1f_{\theta_i^\star}}{f^\star} \\
        \mbox{} + \frac{1}{2}
        \sum_{j\in J:\theta_j\in A_i}\pi_j(\theta_j-\theta_i^\star)^*\,
        \frac{D_2f_{\theta_i^\star}}{f^\star}(\theta_j-\theta_i^\star)\Bigg\}
        + \sum_{j\not\in J}\pi_j\,\frac{f_{\theta_j}}{f^\star}.
\end{multline*}
Now note that
\begin{align*}
        \left|
        \frac{f/f^\star-1}{\sqrt{\chi^2(f||f^\star)}} -
        \frac{L}{\|L\|_2}
        \right| &\le
        \frac{|f/f^\star-1|}{\|f/f^\star-1\|_2}
        \frac{\|f/f^\star-1-L\|_2}{\|L\|_2}
        + \frac{|f/f^\star-1-L|}{\|L\|_2} \\
        &\le
        \frac{\|f/f^\star-1-L\|_2\,S+|f/f^\star-1-L|}{\|L\|_2},
\end{align*}
where we have used Lemma \ref{lem:senvelope}.
By Theorem \ref{thm:localgeom}, we obtain
$$
        \|L\|_2 \ge \|L\|_1 \ge
        \frac{c^\star}{2}\sum_{i=1}^{q^\star}
        \sum_{j\in J:\theta_j\in A_i}\pi_j\|\theta_j-\theta_i^\star\|^2.
$$
Therefore, we can estimate
$$
        \frac{|f/f^\star-1-L|}{\|L\|_2} \le
        \frac{d^{3/2}H_3}{3c^\star}
        \frac{\sum_{i=1}^{q^\star}\sum_{j\in J:\theta_j\in A_i}
        \pi_j\|\theta_j-\theta_i^\star\|^3}{
        \sum_{i=1}^{q^\star}
        \sum_{j\in J:\theta_j\in A_i}\pi_j\|\theta_j-\theta_i^\star\|^2} \le
        \left(\frac{4\alpha}{c^\star}\right)^{1/4}
        \frac{d^{3/2}H_3}{3c^\star}
$$
where we have used the definition of $J$.  Setting $\ell=L/\|L\|_2$,
we obtain
$$
        \left|
        \frac{f/f^\star-1}{\sqrt{\chi^2(f||f^\star)}} -
        \ell
        \right| \le
        \frac{d^{3/2}\sqrt{2}}{3(c^\star)^{5/4}}\,
        \{\|H_3\|_2\, S + H_3\}\,\alpha^{1/4}.
$$
It remains to show that for our choice of $\ell=L/\|L\|_2$, the coefficients
$\eta,\beta,\rho,\gamma$ in the statement of the lemma satisfy the 
desired bounds.  These coefficients are
$$
        \displaylines{
        \eta_i=\frac{1}{\|L\|_2}\Bigg(
        \sum_{j\in J:\theta_j\in A_i}\pi_j-\pi_i^\star\Bigg),\qquad
        \beta_i=\frac{1}{\|L\|_2}\sum_{j\in J:\theta_j\in A_i}
                \pi_j(\theta_j-\theta_i^\star),
        \cr
        \rho_{i}=\frac{1}{2\|L\|_2}
        \sum_{j\in J:\theta_j\in A_i}
                \pi_j(\theta_j-\theta_i^\star)
		(\theta_j-\theta_i^\star)^*,
	\qquad
        \gamma_j=\frac{\pi_j\one_{j\not\in J}}{\|L\|_2}.
        }
$$
Clearly $\mathrm{rank}[\rho_i]\le \#\{j:\theta_j\in A_i\}\wedge d$, so
$\sum_{i=1}^{q^\star}\mathrm{rank}[\rho_i]\le q\wedge dq^\star$.
Moreover,
\begin{multline*}
        \|L\|_2 \ge c^\star\Bigg[\sum_{j:\theta_j\in A_0}\pi_j +
	\sum_{i=1}^{q^\star}
        \Bigg\{
        \Bigg|\sum_{j:\theta_j\in A_i}\pi_j-\pi_i^\star\Bigg| \\
        +\Bigg\|
        \sum_{j:\theta_j\in A_i}\pi_j(\theta_j-\theta_i^\star)\Bigg\|
        + \frac{1}{2}
        \sum_{j:\theta_j\in A_i}\pi_j\|\theta_j-\theta_i^\star\|^2\Bigg\}
	\Bigg]
\end{multline*}
by Theorem \ref{thm:localgeom}.  It follows that 
$\sum_{i=1}^{q^\star}\mathrm{Tr}[\rho_{i}]\le 1/c^\star$.
Now note that for $j\not\in J$ such that $\theta_j\in A_i$, we have 
$\|\theta_j-\theta_i^\star\|^2>2\sqrt{\alpha/c^\star}$ by construction.
Therefore
$$
        \|L\|_2 
        \ge c^\star\Bigg[
	\sum_{j\not\in J:\theta_j\in A_0}\pi_j +
	\frac{1}{2}\sum_{i=1}^{q^\star}
        \sum_{j\not\in J:\theta_j\in A_i}
        \pi_j\|\theta_j-\theta_i^\star\|^2 
	\Bigg]
        \ge ( \sqrt{c^\star\alpha} \wedge c^\star )\,\sum_{j\not\in J}\pi_j.
$$
It follows that $\sum_{j=1}^q|\gamma_j|\le 1/ ( \sqrt{c^\star\alpha} \wedge c^\star )$.
Next, we note that
$$
        \sum_{i=1}^{q^\star}
        \Bigg|\sum_{j\in J:\theta_j\in A_i}\pi_j-\pi_i^\star\Bigg|
        \le
        \sum_{i=1}^{q^\star}
        \Bigg|\sum_{j:\theta_j\in A_i}\pi_j-\pi_i^\star\Bigg| +
        \sum_{j\not\in J:\theta_j\not\in A_0}\pi_j.
$$
Therefore $\sum_{i=1}^{q^\star}|\eta_i|\le 
1/c^\star+1/\sqrt{c^\star\alpha}$.  Finally, note that
$$
        \sum_{i=1}^{q^\star}
        \Bigg\|\sum_{j\in J:\theta_j\in A_i}
                \pi_j(\theta_j-\theta_i^\star)\Bigg\| \le
        \sum_{i=1}^{q^\star}
        \Bigg\|\sum_{j:\theta_j\in A_i}
                \pi_j(\theta_j-\theta_i^\star)\Bigg\| +
        2T\sum_{j\not\in J:\theta_j\not\in A_0}\pi_j.
$$
Therefore $\sum_{i=1}^{q^\star}\|\beta_i\|\le
1/c^\star+2T/\sqrt{c^\star\alpha}$.  The proof is complete.
\end{proof}

We can now complete the proof of Theorem \ref{thm:mainmixtures}.

\begin{proof}[Proof of Theorem \ref{thm:mainmixtures}]
Let $\alpha>0$ be a constant to be chosen later on, and
$$
        \mathcal{D}_{q,\alpha}=\{d_f:f\in\mathcal{M}_q,~
        f\ne f^\star,~
        h(f,f^\star)\le\alpha\}.
$$
Then clearly
$$
        N_{[]}(\mathcal{D}_q,\delta) \le
        N_{[]}(\mathcal{D}_{q,\alpha},\delta) +
        N_{[]}(\mathcal{D}_q\backslash\mathcal{D}_{q,\alpha},\delta).
$$
We will estimate each term separately.

\textbf{Step 1} (\emph{the first term}). 
Define
$$
	\mathbb{M}_{q}=
	\{(m_{1},\ldots,m_{q^\star})\in
	\mathbb{Z}_+^{q^\star}:m_{1}+\cdots+m_{q^\star}=q
	\wedge dq^\star\}.
$$
For every $m\in\mathbb{M}_q$, we define the family of functions
\begin{multline*}
\mathcal{L}_{q,m,\alpha}=       \Bigg\{
        \sum_{i=1}^{q^\star}\bigg\{
        \eta_i\frac{f_{\theta_i^\star}}{f^\star}
        +\beta_i^*\frac{D_1f_{\theta_i^\star}}{f^\star}
        +\sum_{j=1}^{m_{i}}\rho_{ij}^*\frac{D_2f_{\theta_i^\star}}{f^\star}\rho_{ij}
        \bigg\}
        + \sum_{j=1}^{q}\gamma_j\frac{f_{\theta_j}}{f^{\star}}:\mbox{}\\
        (\eta,\beta,\rho,\gamma,\theta)\in\mathfrak{I}_{q,m,\alpha}\Bigg\},
\end{multline*}
where
\begin{multline*}
        \mathfrak{I}_{q,m,\alpha}=\Bigg\{(\eta,\beta,\rho,\gamma,\theta)\in
        \mathbb{R}^{q^\star}\times (\mathbb{R}^d)^{q^\star}\times
        (\mathbb{R}^d)^{m_{1}}\times\cdots\times (\mathbb{R}^d)^{m_{q^\star}}
	\times \mathbb{R}^q\times\Theta^q: \mbox{}\\
        \sum_{i=1}^{q^\star}|\eta_i|\le 
        \frac{1}{c^\star}+\frac{1}{\sqrt{c^\star\alpha}},
        \qquad
        \sum_{i=1}^{q^\star}\|\beta_i\|\le 
        \frac{1}{c^\star}+\frac{2T}{\sqrt{c^\star\alpha}},
        \mbox{}\\
        \sum_{i=1}^{q^\star}\sum_{j=1}^{m_{i}}\|\rho_{ij}\|^2\le \frac{1}{c^\star},
        \qquad
        \sum_{j=1}^q|\gamma_j|\le \frac{1}{\sqrt{c^\star\alpha}\wedge c^\star}
        \Bigg\}.
\end{multline*}
Define the family of functions
$$
\mathcal{L}_{q,\alpha}= \bigcup_{m\in\mathbb{M}_{q}}\mathcal{L}_{q,m,\alpha}
$$
From Lemmas \ref{lem:chi2} and \ref{lem:deriv}, we find that for
any function $d\in\mathcal{D}_{q,\alpha}$, there exists a function 
$\ell\in \mathcal{L}_{q,\alpha}$ such that (here we use that
$h(f,f^\star)\le\sqrt{2}$ for any $f$)
$$
        |d-\ell| \le 
        \{4\|S\|_4^2 S+2S^2\}\,(\alpha\wedge\sqrt{2})
        + \frac{d^{3/2}\sqrt{2}}{3(c^\star)^{5/4}}\,
        \{\|H_3\|_2\, S + H_3\}\,\alpha^{1/4}.
$$
Using $\alpha\wedge\sqrt{2}\le 2^{3/8}\alpha^{1/4}$ for all $\alpha>0$, we 
can estimate
$$
        |d-\ell| \le \alpha^{1/4}\,U,\qquad
        U=\left( \frac{1+\|H_3\|_2}{(c^\star)^{5/4}}+
        8\|S\|_4^2 + 4\right)d^{3/2}\,\{S+S^2+H_3\},
$$
where $U\in L^2(f^\star d\mu)$ by Assumption A.  
Now note that if $m_1\le\ell\le m_2$ for some functions $m_1,m_2$ with 
$\|m_2-m_1\|_2\le\varepsilon$, then $m_1-\alpha^{1/4}\,U\le d\le 
m_2+\alpha^{1/4}\,U$ with $\|(m_2+\alpha^{1/4}\,U)
-(m_1-\alpha^{1/4}\,U)\|_2\le \varepsilon+2\alpha^{1/4}\|U\|_2$.
Therefore
$$
        N_{[]}(\mathcal{D}_{q,\alpha},
        \varepsilon+2\alpha^{1/4}\|U\|_2) \le
        N_{[]}(\mathcal{L}_{q,\alpha},\varepsilon) \le
        \sum_{m\in\mathbb{M}_{q}}N_{[]}(\mathcal{L}_{q,m,\alpha},\varepsilon)
        \quad
        \mbox{for }\varepsilon>0.
$$
Of course, we will ultimately choose $\varepsilon,\alpha$ such that
$\varepsilon+2\alpha^{1/4}\|U\|_2=\delta$.

We proceed to estimate the bracketing number 
$N_{[]}(\mathcal{L}_{q,m,\alpha},\varepsilon)$. To this end, let 
$\ell,\ell'\in\mathcal{L}_{q,m,\alpha}$, where $\ell$ is defined by the 
parameters $(\eta,\beta,\rho,\gamma,\theta)\in\mathfrak{I}_{q,m,\alpha}$ and 
$\ell'$ is defined by the parameters 
$(\eta',\beta',\rho',\gamma',\theta')\in\mathfrak{I}_{q,m,\alpha}$.  Note that
$$
	\sum_{i=1}^{q^\star}\sum_{j=1}^{m_i}
	\Bigg|
	\rho_{ij}^*\frac{D_2f_{\theta_i^\star}}{f^\star}\rho_{ij}-
	(\rho_{ij}')^*\frac{D_2f_{\theta_i^\star}}{f^\star}\rho_{ij}'
	\Bigg|
	\le
	\frac{2d}{\sqrt{c^\star}}\,H_2
	\sum_{i=1}^{q^\star}\sum_{j=1}^{m_i}
	\,\|\rho_{ij}-\rho_{ij}'\|.
$$
We can therefore estimate
\begin{multline*}
        |\ell-\ell'| \le
	H_0\sum_{i=1}^{q^\star}|\eta_i-\eta_i'| +
	H_1\sqrt{d}\sum_{i=1}^{q^\star}\|\beta_i-\beta_i'\| + 
	H_0\sum_{j=1}^q|\gamma_j-\gamma_j'| +
	\\
	\frac{\sqrt{d}}{\sqrt{c^\star\alpha}\wedge c^\star}
	\,H_1 \max_{j=1,\ldots,q}\|\theta_j-\theta_j'\| +
	\frac{2d\sqrt{dq^\star}}{\sqrt{c^\star}}\,H_2\,
	\Bigg[
	\sum_{i=1}^{q^\star}\sum_{j=1}^{m_i}
	\,\|\rho_{ij}-\rho_{ij}'\|^2\Bigg]^{1/2}.
\end{multline*}
where we have used that $|f_{\theta}-f_{\theta'}|/f^\star \le
\|\theta-\theta'\|\,H_1 \sqrt{d}$ by Taylor expansion.
Therefore, writing $V=(H_0+H_1+H_2)\,d\sqrt{dq^\star}$, we have
$$
        |\ell-\ell'| \le V\,
        \tnorm{(\eta,\beta,\rho,\gamma,\theta)-
        (\eta',\beta',\rho',\gamma',\theta')}_{q,m,\alpha},
$$
where $\tnorm{\cdot}_{q,m,\alpha}$ is the norm on
$\mathbb{R}^{(1+d)q^\star+ d(q\wedge dq^\star)+(1+d)q}$ defined by
\begin{multline*}
        \tnorm{(\eta,\beta,\rho,\gamma,\theta)}_{q,m,\alpha} = 
        \sum_{i=1}^{q^\star}|\eta_i|
	+ \sum_{i=1}^{q^\star}\|\beta_i\|
        + \sum_{j=1}^q|\gamma_j| \\
	+
        \frac{1}{\sqrt{c^\star\alpha}\wedge c^\star}
	\max_{j=1,\ldots,q}\|\theta_j\|
	+
	\frac{2}{\sqrt{c^\star}}\,
	\Bigg[
	\sum_{i=1}^{q^\star}\sum_{j=1}^{m_i}
	\,\|\rho_{ij}\|^2\Bigg]^{1/2}.
\end{multline*}
Note that if $\tnorm{(\eta,\beta,\rho,\gamma,\theta)-
(\eta',\beta',\rho',\gamma',\theta')}_{q,m,\alpha}\le \varepsilon'$,
then we obtain a bracket $\ell'-\varepsilon'V\le \ell \le 
\ell'+\varepsilon'V$ of size
$\|(\ell'+\varepsilon'V)-(\ell'-\varepsilon'V)\|_2=
2\varepsilon'\|V\|_2$.  Thus 
$$
        N_{[]}(\mathcal{L}_{q,m,\alpha},\varepsilon)
        \le N(\mathfrak{I}_{q,m,\alpha},\tnorm{\cdot}_{q,m,\alpha},
        \varepsilon/2\|V\|_2)\quad
        \mbox{for }\varepsilon>0,
$$
where $N(\mathfrak{I}_{q,m,\alpha},\tnorm{\cdot}_{q,m,\alpha},\varepsilon')$
denotes the covering number of $\mathfrak{I}_{q,m,\alpha}$ with respect to the
$\tnorm{\cdot}_{q,m,\alpha}$-norm.
But note that, by construction, $\mathfrak{I}_{q,m,\alpha}$ is included
in a $\tnorm{\cdot}_{q,m,\alpha}$-ball of radius not exceeding
$(6+3T)/(\sqrt{c^\star\alpha}\wedge c^\star)$.  Therefore, using the
standard fact that the covering number of the $r$-ball 
$B(r)=\{x\in B:\tnorm{x}\le r\}$ in any $n$-dimensional normed space 
$(B,\tnorm{\cdot})$ satisfies $N(B(r),\tnorm{\cdot},\varepsilon)\le 
(\frac{2r+\varepsilon}{\varepsilon})^n$, we obtain
$$
        N_{[]}(\mathcal{L}_{q,m,\alpha},\varepsilon) \le
	\left(\frac{4\|V\|_2(6+3T)/(\sqrt{c^\star\alpha}\wedge c^\star)
	+\varepsilon}{\varepsilon}
	\right)^{(1+d)q^\star+ d(q\wedge dq^\star)+(1+d)q}.
$$
In particular, if $\varepsilon\le 1$ and $\alpha\le c^\star$, then
$$
        N_{[]}(\mathcal{L}_{q,m,\alpha},\varepsilon) \le
	\left(\frac{(24+12T)\|V\|_2/\sqrt{c^\star}
	+\sqrt{c^\star}}{\varepsilon\sqrt{\alpha}}
	\right)^{3(d+1)q}.
$$
Finally, note that the cardinality of $\mathbb{M}_q$ can be
estimated as
$$
	\#\mathbb{M}_q = 
	{q^\star + q\wedge dq^\star -1 \choose q\wedge dq^\star}
	\le
	2^{2q},
$$
where we have used that ${n\choose k}\le 2^n$ and
$q\ge q^\star$.  We therefore obtain
\begin{align*}
        N_{[]}(\mathcal{D}_{q,\alpha},\delta) &\le
        \sum_{m\in\mathbb{M}_{q}}
	N_{[]}(\mathcal{L}_{q,m,\alpha},
	\delta-2\alpha^{1/4}\|U\|_2) \\
	&\le
	\left(\frac{24(2+T)\|V\|_2/\sqrt{c^\star}
	+\sqrt{c^\star}}{(\delta-2\alpha^{1/4}\|U\|_2)\sqrt{\alpha}}
	\right)^{3(d+1)q}
\end{align*}
whenever $\delta\le 1$ and $\alpha\le (\delta/2\|U\|_2)^4\wedge c^\star$.

\textbf{Step 2} (\emph{the second term}).
  For $f,f'\in\mathcal{M}_q$
with $h(f,f^\star)>\alpha$ and $h(f',f^\star)>\alpha$,
\begin{align*}
        |d_f-d_{f'}| &=
	\frac{|
	(\sqrt{f/f^\star}-1)(h(f',f^\star)-h(f,f^\star))+
	(\sqrt{f/f^\star}-\sqrt{f'/f^\star})h(f,f^\star)|}{h(f,f^\star)h(f',f^\star)}
        \\ &\le 
	\frac{
	|\sqrt{f/f^\star}-1|}{h(f,f^\star)}
	\frac{\|\sqrt{f'/f^\star}-\sqrt{f/f^\star}\|_2}
	{h(f',f^\star)}
	+
	\frac{|\sqrt{f/f^\star}-\sqrt{f'/f^\star}|}{h(f',f^\star)}
        \\ &\le 
	\|\sqrt{f'/f^\star}-\sqrt{f/f^\star}\|_2\,
	\frac{2S}{\alpha}
	+
	\frac{|\sqrt{f/f^\star}-\sqrt{f'/f^\star}|}{\alpha},
\end{align*}
where we have used Corollary \ref{cor:denvelope}.
Now note that 
$$
        \big|\sqrt{a}-\sqrt{b}\big|^2 \le 
        \big|\sqrt{a}-\sqrt{b}\big|\left(\sqrt{a}+\sqrt{b}\right)
        =|a-b|
$$
for any $a,b\ge 0$.  We can therefore estimate
$$
        |d_f-d_{f'}| \le
        \frac{
        \|(f-f')/f^\star\|_1^{1/2}2S +
        |(f-f')/f^\star|^{1/2}}{\alpha},
$$
Now note that if we write 
$f=\sum_{i=1}^q\pi_if_{\theta_i}$ and 
$f'=\sum_{i=1}^q\pi_i'f_{\theta_i'}$, then 
$$
        \left|\frac{f-f'}{f^\star}\right| \le
        H_0\sum_{i=1}^q|\pi_i-\pi_i'| + 
        H_1\sqrt{d}\max_{i=1,\ldots,q}\|\theta_i-\theta_i'\|.
$$
Defining
$$
        W = \|H_0+H_1\sqrt{d}\|_1^{1/2}2S +
        (H_0+H_1\sqrt{d})^{1/2},
$$
we obtain
$$
        |d_f-d_{f'}| \le
        \frac{W}{\alpha}\,\tnorm{(\pi,\theta)-(\pi',\theta')}_q^{1/2},
        \qquad
        \tnorm{(\pi,\theta)}_q=\sum_{i=1}^q|\pi_i| +
        \max_{i=1,\ldots,q}\|\theta_i\|
$$
(clearly $\tnorm{\cdot}_q$ defines a norm on $\mathbb{R}^{(d+1)q}$).  
Now note that if $\tnorm{(\pi,\theta)-(\pi',\theta')}_q\le\varepsilon$, 
then we obtain a bracket $d_{f'}-\varepsilon^{1/2}W/\alpha\le
d_f\le d_{f'}+\varepsilon^{1/2}W/\alpha$ of size
$\|(d_{f'}+\varepsilon^{1/2}W/\alpha)-(d_{f'}-\varepsilon^{1/2}W/\alpha)\|_2
= 2\varepsilon^{1/2}\|W\|_2/\alpha$.  Therefore
$$
        N_{[]}(\mathcal{D}_q\backslash\mathcal{D}_{q,\alpha},\delta)
        \le
        N(\Delta_q\times\Theta^q,
        \tnorm{\cdot}_q,\alpha^2\delta^2/4\|W\|_2^2),
$$
where we have defined the simplex 
$\Delta_q=\{\pi\in\mathbb{R}^q_+:\sum_{i=1}^q\pi_i=1\}$.
We can now estimate the quantity on the right hand side of this expression 
as before, giving
$$
        N_{[]}(\mathcal{D}_q\backslash\mathcal{D}_{q,\alpha},\delta)
        \le
	\left(
	\frac{8(1+T)\|W\|_2^2 +(c^\star)^4}{\alpha^2\delta^2}
	\right)^{(d+1)q}
$$
for $\delta\le 1$ and $\alpha\le c^\star$.

\textbf{End of proof.}  Choose $\alpha=(\delta/4\|U\|_2)^4$.  Collecting 
the various estimates above, we find that for 
$\delta\le 1\wedge 4(c^\star)^{1/4}$ (as $\|U\|_2\ge\|S\|_1\ge 1$
by Lemma \ref{lem:senvelope})
\begin{align*}
        N_{[]}(\mathcal{D}_{q},\delta) 
        &\le
	\left(\frac{768(2+T)\|U\|_2^2\|V\|_2/\sqrt{c^\star}
	+32\|U\|_2^2\sqrt{c^\star}}{\delta^3}
	\right)^{3(d+1)q} \\
	&\qquad\quad\mbox{}+
	\left(
	\frac{4^{10}(1+T)\|U\|_2^{8}\|W\|_2^2 
	+ 4^{8}\|U\|_2^{8}(c^\star)^4}{\delta^{10}}
	\right)^{(d+1)q} \\
	&\le
	\left(
	\frac{
	c_0^\star\,(T\vee 1)^{1/3}\,
	(\|U\|_2\vee\|V\|_2\vee\|W\|_2)
	}{\delta}
	\right)^{10(d+1)q} 
\end{align*}
where $c_0^\star$ is a constant depends only on $c^\star$.  It follows that
$$
        N_{[]}(\mathcal{D}_{q},\delta) \le
	\left(
	\frac{
	C^\star (T\vee 1)^{1/3} 
	(\|H_0\|_4^4\vee\|H_1\|_4^4\vee\|H_2\|_4^4\vee\|H_3\|_2^2)
	}{\delta}
	\right)^{10(d+1)q} 
$$
for all $\delta\le\delta^\star$, where $C^\star$ and $\delta^\star$
are constants that depend only on $c^\star$, $d$, and $q^\star$.
This establishes the estimate given in the statement of the Theorem.
The proof of the second half of the Theorem follows from 
Corollary \ref{cor:denvelope} and $\|H_0\|_4\ge 1$.
\end{proof}

\textbf{Acknowledgments.}
The authors thank Jean Bretagnolle for providing an 
enlightening counterexample that guided some of our proofs, and
the anonymous referee for very helpful comments that have improved
the presentation.

\bibliographystyle{amsplain}
\bibliography{refbis}

\end{document}